\newtheorem{theorem}{Theorem}[section]
\newtheorem{corollary}[theorem]{Corollary}
\newtheorem{lemma}[theorem]{Lemma}
\newtheorem{proposition}[theorem]{Proposition}
\newtheorem{definition}[theorem]{Definition}
\renewcommand{\ker}{\text{\textnormal{ker}}}
\newcommand{\R}{\mathbb{R}}
\newcommand{\C}{\mathbb{C}}
\newcommand{\Abs}[1]{\left\lvert #1 \right\rvert}
\newcommand{\brackets}[1]{\langle #1 \rangle}
\newcommand{\dsum}{\oplus}
\newcommand{\interior}[1]{\accentset{\circ}{#1}}
\renewcommand{\Re}{\text{\textnormal{Re}}}
\renewcommand{\Im}{\text{\textnormal{Im}}}
\newcommand{\norm}[1]{\left\lVert#1\right\rVert}
\newcommand{\tr}{\text{\textnormal{tr}}}
\newcommand{\dist}{\text{\textnormal{dist}}}
\newcommand\restr[2]{{
		\left.\kern-\nulldelimiterspace 
		#1 
		\vphantom{\big|} 
		\right|_{#2} 
}}
\begin{document}
	\title{Convergence of the Laws of Non-Hermitian Sums of Projections}
	\author{Max Sun Zhou}	
	\date{}
	
	\begin{abstract}
		We consider the random matrix model $X_n = P_n + i Q_n$, where $P_n$ and $Q_n$ are independently Haar-unitary rotated Hermitian matrices with at most $2$ atoms in their spectra. Let $(M, \tau)$ be a tracial von Neumann algebra and let $p, q \in (M, \tau)$, where $p$ and $q$ are Hermitian and freely independent. Our main result is the following convergence result: if the law of $P_n$ converges to the law of $p$ and the law of $Q_n$ converges to the law of $q$, then the empirical spectral distributions of the $X_n$ converges to the Brown measure of $X = p + i q$. To prove this, we use the Hermitization technique introduced by Girko, along with the algebraic properties of projections to prove the key estimate. We also prove a converse statement by using the properties of the Brown measure of $X$.
	\end{abstract}	
	
	\maketitle

\section{Introduction}

Let $M$ be a von Neumann algebra with a faithful, normal tracial state $\tau$. The pioneering work of Voiculescu in \cite{Voiculescu1991} showed it is possible to understand the limits of empirical spectral distributions of independent Gaussian random matrices as the laws of freely independent random variables. While there are powerful and varied techniques in determining the limit laws of Hermitian random matrices, there have been few techniques and examples for determining the limit laws of non-Hermitian (i.e. non-normal) random matrices. Two notable examples have been the Circular Law (finally proven in complete generality in \cite{TaoPaper}) and the Single Ring Theorem in \cite{GuionnetPaper} (which has been recently generalized in \cite{ZhongHoPaper}).

Before proving the proving the convergence, we must first determine what the limit of the random matrix model should even be. Of the random matrix models that are considered, typically there is already a natural limit operator of the random matrix model that comes from free probability. To any operator in a tracial von Neumann algebra, there is an associated complex Borel probability measure, the \textit{Brown measure}. The Brown measure of the natural limit operator is the candidate limit law of empirical spectral distributions of the random matrices. There are counterexamples to this (see  \cite[][Chapter 11, Exercise 15]{SpeicherBook} for a simple counterexample), but typically this is true (see \cite{SniadyPaper} for a precise statement of this). In the case of the Circular Law and the Single Ring Theorem, the limit law is indeed the Brown measure of the limit operator. 

Having identified the tentative limit law, then we can ask what techniques are available in proving the convergence. The main method was first introduced in \cite{GirkoPaper} as a method of proving the Circular Law and involves a  ``Hermitization'' of the problem, i.e. translating the problem of the convergence of the non-Hermitian random matrices to the convergence of associated Hermitian random matrices. The convergence of these associated Hermitian random matrices is well-understood, but the trade-off is that one must bound the minimum singular values of the random matrices from below. The Hermitization method was used to great effect in the proofs of the Circular Law and the Single Ring Theorem, and obtaining the appropriate minimum singular value estimates was the critical step in both of the proofs. 

In this paper, we will use this method and prove that the limit of the empirical spectral distributions of a random matrix model $X_n$ is the Brown measure of the appropriate limit operator. 

Now, we define the matrix model $X_n$ that is considered in this paper:

\begin{definition}
	\label{def:X_n}
	Let $\mathcal{H}_n$ denote the Haar measure on the unitary group $U(n) \subset M_n(\C)$. Let $\{U_n, V_n\}_{n \geq 1}$ be a sequence of independent, $\mathcal{H}_n$-distributed matrices. Let $P_n', Q_n' \in M_n(\C)$ be deterministic, Hermitian, and 
	\begin{equation}
		\label{eqn:x_n_defn}
		\begin{aligned}
			\bm{\mu_{P_n'}} & = a_n \delta_{\alpha_n} + (1 - a_n) \delta_{\alpha_n'} \\
			\bm{\mu_{Q_n'}} &= b_n \delta_{\beta_n} + (1 - b_n) \delta_{\beta_n'} \, ,
		\end{aligned}
	\end{equation}
	for $a_n, b_n \in [0, 1]$, $\alpha_n, \alpha_n', \beta_n, \beta_n' \in \R$. Let
	\begin{equation}
		\begin{aligned}
			\bm{P_n} & = U_n P_n' U_n^* \\
			\bm{Q_n} &= V_n Q_n' V_n^* \,.
		\end{aligned}
	\end{equation}
	Then, define the matrix model $\bm{X_n}$ as: 
	\begin{equation}
		\bm{X_n} = P_n + i Q_n \,.
	\end{equation}
\end{definition}

We previously considered the natural limit operator of the $X_n$ in \cite{BrownMeasurePaper1}: $X = p + i q$, where $p, q \in (M, \tau)$ are Hermitian, freely independent, and have spectra consisting of 2 atoms. In that paper, we explicitly computed the Brown measure of $X$. In this paper, we will prove the convergence of the empirical spectral distributions of the $X_n$ to the Brown measure of $X$. 

Our main result is Theorem \ref{thm:convergence_brown_measure}: 

\newtheorem*{thm:convergence_brown_measure}{Theorem \ref{thm:convergence_brown_measure}}
\begin{thm:convergence_brown_measure}
	Consider the random matrix model $X_n = P_n + i Q_n$, where $P_n, Q_n \in M_n(\C)$ are independently Haar-rotated Hermitian matrices with distributions with at most $2$ atoms. Suppose that the law of $P_n$ converges to the law of $p$ and the law of $Q_n$ converges to the law of $q$, where $p, q \in (M, \tau)$ are freely independent. Then, the empirical spectral distribution of $X_n$ converges almost surely in the vague topology to the Brown measure of $X = p + i q$.  
\end{thm:convergence_brown_measure}

Note that if the law of $P_n$ converges to the law of $p$, then $\mu_p$ is indeed supported on at most $2$ points: if the support of $\mu_p$ contained more points, then by testing the convergence of non-negative, compactly supported, continuous functions that are $1$ on neighborhoods of these points, we get a contradiction to the law of $P_n$ having at most $2$ atoms. The analogous result holds if the law of $Q_n$ converges to $q$. 

The vast majority of our effort will be spent dealing with the situation when $p$ and $q$ have $2$ atoms. The alternative situation where either $p$ or $q$ is constant (and hence real numbers) are special cases that can be handled by analyzing limiting cases of the Brown measure and using properties of the Brown measure of $X = p + i q$. 

The following are figures generated using Mathematica illustrate the convergence, comparing the empirical spectral distributions of some deterministic $X_n = P_n + i Q_n$ for $n = 200, 750$:

\begin{figure}[H]
	\centering
	\begin{subfigure}{0.45 \textwidth}
		\centering
		\includegraphics[width = .9 \textwidth]{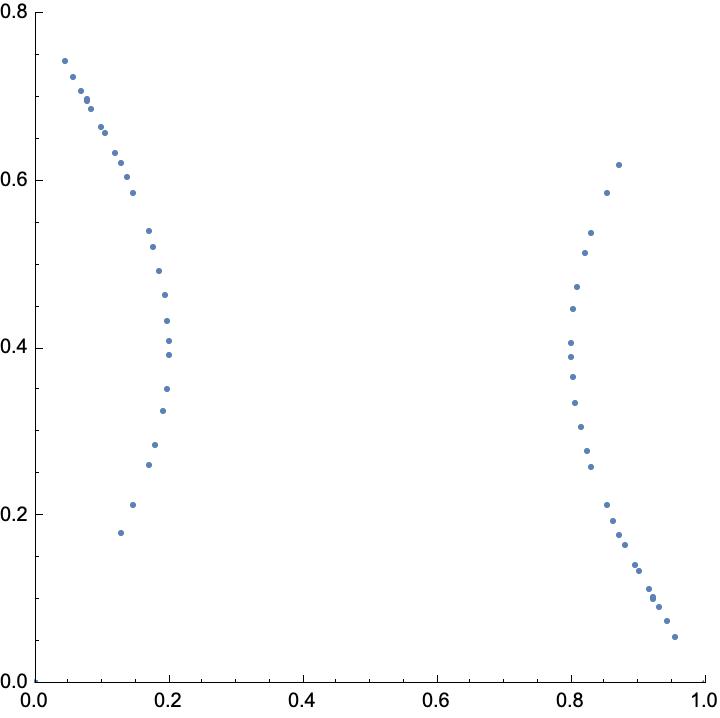}
		\caption{ESD of $X_n$, $n = 200$}
	\end{subfigure}
	\hfill
	\begin{subfigure}{0.45 \textwidth}
		\centering
		\includegraphics[width = .9 \textwidth]{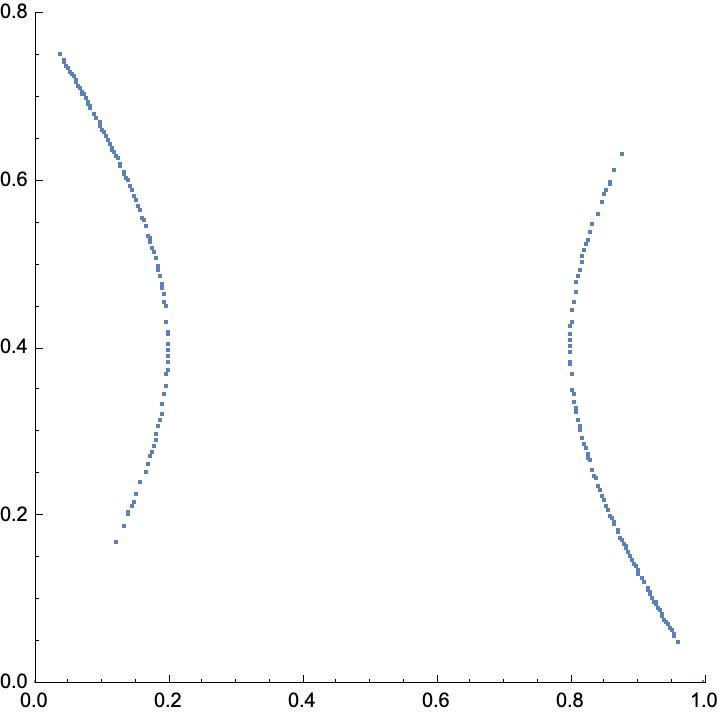}
		\caption{ESD of $X_n$, $n = 750$}
	\end{subfigure}ç
	\caption{$X_n = P_n + i Q_n$ \\ $\mu_{P_n} \approx (5/8) \delta_{0} + (3/8) \delta_{1}$ \\ $\mu_{Q_n} \approx (7/8) \delta_0 + (1/8) \delta_{4/5}$ }
	\label{fig:thm:boundary_curve}
\end{figure} 

We also prove a converse result in Theorem \ref{thm:converse} that the Brown measure of operators of the form $X = p + i q$ are the only possible limits for the empirical spectral distributions of $X_n$:

\newtheorem*{thm:converse}{Theorem \ref{thm:converse}}
\begin{thm:converse}
	Consider the random matrix model $X_n = P_n + i Q_n$, where $P_n, Q_n \in M_n(\C)$ are independently Haar-rotated Hermitian matrices with distributions with at most $2$ atoms. Suppose that the empirical spectral distribution of $X_n$ converges in probability in the vague topology to some deterministic probability measure $\mu$. Then, the law of $P_n$ converges to the law of $p$ and the law of $Q_n$ converges to the law of $q$ for some $p, q \in (M, \tau)$ where $p$ and $q$ are freely independent. Hence, $\mu$ is the Brown measure of $X = p + i q$.
\end{thm:converse}

The rest of the paper is organized as follows. In Section \ref{sec:preliminaries}, we recall the definition of the Brown measure, some properties of the Brown measure of $X = p + i q$ from \cite{BrownMeasurePaper1}, and outline the Hermitization method. In Section \ref{sec:spec_measures_converge}, we complete the first step of this method. In Section \ref{sec:min_singular_value}, we prove the minimum singular value estimates. In Section \ref{sec:convergence_converse}, we prove Theorems \ref{thm:convergence_brown_measure} and \ref{thm:converse}.

\section{Preliminaries}
\label{sec:preliminaries}

\subsection{Brown measure}
In this subsection, we recall the definition of the Brown measure of an operator $X \in (M, \tau)$ and the relevant properties. 

First, we recall the definition of the Fuglede-Kadison determinant, first introduced in \cite{KadisonPaper}: 

\begin{definition}
	Let $x \in (M, \tau)$. Let $\mu_{\Abs{x}}$ be the spectral measure of $\Abs{x} = (x^* x)^{1/2}$. Then, the \textbf{Fuglede-Kadison determinant} of $x$, $\Delta(x)$, is given by: 
	\begin{equation}
		\Delta(x) = \exp \left[ \int_{0}^{\infty} \log t \, d \mu_{\Abs{x}}(t) \right]   \,.
	\end{equation}
\end{definition}

When $x = X_n \in (M_n(\C), \frac{1}{n} \tr)$, then $\Delta(X_n) = \Abs{\det(X_n)}^{1/n}$. Thus, the Fuglede-Kadison determinant is a generalization of the normalized, positive determinant of a complex matrix. 

By applying the functional calculus to a decreasing sequence of continuous functions on $[0, \infty)$ converging to $\log t$, we have: 

\begin{equation}
	\log \Delta(x) = \int_{0}^{\infty} \log t \, d \mu_{\Abs{x}}(t) = \frac{1}{2} \int_{0}^{\infty} \log t \, d \mu_{\Abs{x}^2}(t) \,.
\end{equation}

In practice, it is more convenient to compute the right-hand side to compute $\Delta(X)$. 

Then, the Brown measure is the following distribution:

\begin{definition}
	Let $x \in (M, \tau)$. The \textbf{Brown measure} of $x$ is defined as:
	\begin{equation}
		\mu_x = \frac{1}{2 \pi}\nabla^2 \log \Delta(z - x) \,.
	\end{equation}
\end{definition} 

In \cite{BrownPaper}, some properties of the Brown measure are proven (see also \cite{SpeicherBook}, \cite{TaoBook}, \cite{HaagerupPaper} for more exposition on some of the basic results). In particular, the Brown measure of $x$ is a probability measure supported on the spectrum of $x$. When $x$ is normal, then the Brown measure is the spectral measure. When $x$ is a random matrix, then the Brown measure is the empirical spectral distribution.

Further, the Riesz representation for the subharmonic function $L(z) = \log \Delta(z - x)$ is given by: 
\begin{equation}
	\log \Delta(z - x) = \int_{\C}^{} \log \Abs{z - w} \, d \mu_x(w) \,,
\end{equation}
and the Brown measure is the unique complex Borel measure to satisfy this equation.

\subsection{Brown measure of $X = p + i q$}
In this subsection, we recall results from \cite{BrownMeasurePaper1} on the Brown measure of $X = p + i q$, where $p, q \in (M, \tau)$ are Hermitian, freely independent, and have spectra consisting of $2$ atoms. Specifically, let 

\begin{equation}
	\begin{aligned}
		\mu_p & = a \delta_\alpha + (1 - a) \delta_{\alpha'} \\
		\mu_q &= b \delta_\beta + (1 - b) \delta_{\beta'} \,,
	\end{aligned}
\end{equation}
where $a, b \in (0, 1)$, $\alpha, \alpha', \beta, \beta' \in \R$, $\alpha \neq \alpha'$, and $\beta \neq \beta'$.

The exact formulas describing the measure are not necessary, only some key properties which we restate. 

First, we use the definition of the hyperbola and rectangle associated with $X = p + i q$: 
\begin{definition}
	\label{def:hyperbola_rectangle}
	Let $\mathscr{A} = \alpha' - \alpha$ and $\mathscr{B} = \beta' - \beta$. 
	
	The \textbf{hyperbola associated with $X$} is 
	\begin{equation}
		H = \left\lbrace z = x + i y \in \C :	\left( x - \frac{\alpha + \alpha'}{2}  \right)^2 - \left(  y - \frac{\beta + \beta'}{2} \right)^2 = \frac{\mathscr{A}^2 - \mathscr{B}^2}{4}    \right\rbrace \,.
	\end{equation}
	The \textbf{rectangle associated with $X$} is
	\begin{equation}
		R =  \left\lbrace z = x + i y \in \C : x \in  [\alpha \wedge \alpha', \alpha \vee \alpha' ], y \in [\beta \wedge \beta', \beta \vee \beta'] \right\rbrace  \,.
	\end{equation}
\end{definition} 

The Brown measure of $X$ is the convex combination of 4 atoms with another measure $\mu'$: 
\begin{equation}
	\mu = \epsilon_{0 0} \delta_{ \alpha + i \beta } + \epsilon_{0 1} \delta_{\alpha + i \beta'} + \epsilon_{1 0} \delta_{\alpha'+ i \beta} + \epsilon_{1 1} \delta_{\alpha'+ i \beta'} + \epsilon \mu' \,,
\end{equation}

where 
\begin{equation}
	\begin{aligned}
		\epsilon_{0 0} & = \max(0, a  + b - 1) \\
		\epsilon_{0 1} & = \max(0, a + (1 - b) - 1) \\
		\epsilon_{1 0} & = \max(0, (1 - a) + b - 1  )\\
		\epsilon_{1 1} & = \max(0, (1 - a) + (1 - b) - 1) \\
		\epsilon &= 1 - (\epsilon_{0 0} + \epsilon_{0 1} + \epsilon_{1 0} + \epsilon_{1 1}  ) \,.
	\end{aligned}
\end{equation}	
and $\mu'$ is a measure supported on $H \cap R$. Note that the atoms are at the corners of $H \cap R$, so the entire Brown measure $\mu$ is supported on $H \cap R$.

A useful Corollary is the following, which applies even if $p$ or $q$ is constant: 

\begin{corollary}
	\label{cor:brown_measure_injective}
	Let $p, q \in (M, \tau)$ be Hermitian and freely independent and
	\begin{equation}
		\begin{aligned}
			\mu_p & = a \delta_\alpha + (1 - a) \delta_{\alpha'} \\
			\mu_q &= b \delta_\beta + (1 - b) \delta_{\beta'} \,.
		\end{aligned}
	\end{equation}
	where $a, b \in [0, 1]$ and $\alpha, \alpha', \beta, \beta' \in \R$. Let $\mu$ be the Brown measure of $X = p + i q$. Then, the assignment $(\mu_p, \mu_q) \mapsto \mu$ is 1 to 1. 
\end{corollary}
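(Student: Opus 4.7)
The plan is to prove injectivity by explicitly recovering $(\mu_p, \mu_q)$ from the Brown measure $\mu$. I would split into cases based on whether $p$ and $q$ are scalar multiples of the identity, and in each case read off the relevant parameters from the support and atomic part of $\mu$.

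\textbf{Main case:} both $\mu_p, \mu_q$ have two atoms, so $a, b \in (0,1)$ and $\alpha \neq \alpha'$, $\beta \neq \beta'$. Here the structure theorem recalled above applies. First I would verify by direct substitution into the defining equation of $H$ that the four corners of the rectangle $R$ lie on $H$, and hence in $H \cap R \supseteq \mathrm{supp}(\mu)$. Combined with the fact that the continuous part $\mu'$ extends along the branches of $H$ in $R$, the real-part projection of $\mathrm{supp}(\mu)$ has extremes $\{\alpha \wedge \alpha',\, \alpha \vee \alpha'\}$ and similarly the imaginary-part projection gives $\{\beta, \beta'\}$. Next I would read off the four atomic masses $\epsilon_{ij}$ at the corners. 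Regardless of which $\max$ branch is active, the explicit formulas yield the identities
\[
\epsilon_{00} - \epsilon_{11} = a + b - 1, \qquad \epsilon_{01} - \epsilon_{10} = a - b,
\]
which form a linear system that uniquely determines $(a, b)$. Any ambiguity in labeling $\alpha \leftrightarrow \alpha'$ (respectively $\beta \leftrightarrow \beta'$) corresponds to swapping $a \leftrightarrow 1-a$ (respectively $b \leftrightarrow 1-b$), which preserves the measures $\mu_p$ and $\mu_q$.

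\textbf{Degenerate cases:} if $\mu_p = \delta_\alpha$, then $p = \alpha \cdot 1$ and $X = \alpha + iq$ is normal, so $\mu$ equals the spectral measure of $X$, i.e.\ the pushforward of $\mu_q$ under $y \mapsto \alpha + iy$. One detects this case by $\mathrm{supp}(\mu)$ being contained in a vertical line, reads off $\alpha$ as the common real part, and recovers $\mu_q$ by pushing $\mu$ forward under $z \mapsto \Im(z)$. The symmetric case with $q$ scalar, and the doubly-scalar case where $\mu = \delta_{\alpha + i\beta}$, are handled the same way (detecting them from $\mathrm{supp}(\mu)$ being a horizontal line or a single point).

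\textbf{Main obstacle:} the delicate step is justifying the recovery of $\{\alpha, \alpha'\}$ and $\{\beta, \beta'\}$ from $\mathrm{supp}(\mu)$ in boundary configurations where some $\epsilon_{ij}$ vanish (that is, when $a + b = 1$ or $a = b$), since then the corresponding corners carry no atomic mass. I expect to handle this by invoking the explicit description of the continuous part $\mu'$ on $H \cap R$ from the prior paper, which shows that $\mu'$ charges the full hyperbolic arcs up to and including the corners of $R$. Once those extreme points are known to lie in $\mathrm{supp}(\mu)$, the identification of $\{\alpha, \alpha'\}, \{\beta, \beta'\}$ goes through, and the linear system above still solves for $(a, b)$, completing the recovery.
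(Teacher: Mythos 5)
The paper recalls this corollary from \cite{BrownMeasurePaper1} without giving a proof, so there is no in-paper argument to compare against; I will evaluate your reconstruction on its own terms. The strategy of recovering $(\mu_p, \mu_q)$ from the support and atomic part of $\mu$ is sound, and the details you spell out check: the four corners of $R$ do lie on $H$, the identities $\epsilon_{00} - \epsilon_{11} = a + b - 1$ and $\epsilon_{01} - \epsilon_{10} = a - b$ follow from $\max(0,t) - \max(0,-t) = t$, the relabeling ambiguity $\alpha \leftrightarrow \alpha'$ (swapping $a \leftrightarrow 1-a$) preserves $\mu_p$, and the scalar cases are correctly detected by $\mathrm{supp}(\mu)$ lying on a vertical line, a horizontal line, or a point.

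The obstacle you flag is a real gap in what you wrote: the claim that the real-part projection of $\mathrm{supp}(\mu)$ has extremes $\alpha \wedge \alpha'$ and $\alpha \vee \alpha'$ does not follow from $\mathrm{supp}(\mu) \subseteq H \cap R$ alone, and when the relevant $\epsilon_{ij}$ vanish there is no atom at those corners to save you. However, the reliance on $\mathrm{supp}(\mu')$ can be reduced to a single special case. Since $\epsilon = 2\min(a,1-a,b,1-b) > 0$ and $\mu'$ is atomless, $\mathrm{supp}(\mu)$ contains infinitely many points of $H$; because $H$ ranges over the three-parameter family $x^2 - y^2 + dx + ey + f = 0$, these points determine $H$, hence the center $c = \frac{\alpha+\alpha'}{2} + i\frac{\beta+\beta'}{2}$ and the quantity $\mathscr{A}^2 - \mathscr{B}^2$. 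If $\mu$ has any atom, it sits at some corner $z_0$ of $R$, and by Lemma~\ref{lem:hyperbola_rectangle} one has $\Abs{\Im\left((z_0 - c)^2\right)} = \Abs{\mathscr{A}\mathscr{B}}/2$; combined with $\mathscr{A}^2 - \mathscr{B}^2$ this yields $\mathscr{A}^2$ and $\mathscr{B}^2$ separately, hence $\{\alpha,\alpha'\}$ and $\{\beta,\beta'\}$, after which $a$ and $b$ follow from your linear system or, more directly, from $\int_\C z \, d\mu(z) = \tau(X) = \tau(p) + i\tau(q)$. The only two-atom configuration with no atoms in $\mu$ at all is $a = b = 1/2$, and only in that single symmetric case does one actually need the fact that $\mathrm{supp}(\mu')$ reaches the corners of $R$ — a far smaller appeal to the explicit formula for $\mu'$ than your sketch requires, and one that should be provable directly there.
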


A useful Lemma from \cite[Lemma 4.4]{BrownMeasurePaper1} about the hyperbola and rectangle associated with $X$ is: 

\begin{lemma}
	\label{lem:hyperbola_rectangle}
	Let $\alpha, \alpha', \beta, \beta' \in \R$, where $\alpha \neq \alpha'$ and $\beta \neq \beta'$. Let $\mathscr{A} = \alpha' - \alpha$ and $\mathscr{B} = \beta' - \beta$.
	
	Let 
	\begin{equation}
		\begin{aligned}
			H & = \left\lbrace z = x + i y \in \C :	\left( x - \frac{\alpha + \alpha'}{2}  \right)^2 - \left(  y - \frac{\beta + \beta'}{2} \right)^2 = \frac{\mathscr{A}^2 - \mathscr{B}^2}{4}    \right\rbrace \\
			R & =  \left\lbrace z = x + i y \in \C : x \in  [\alpha \wedge \alpha', \alpha \vee \alpha' ], y \in [\beta \wedge \beta', \beta \vee \beta'] \right\rbrace \,.
		\end{aligned}
	\end{equation}		
	The equation of $H$ is equivalent to: 
	\begin{equation}
		(x - \alpha)(x - \alpha') = (y - \beta)(y - \beta') \,.
	\end{equation}
	The equation of $H$ in coordinates 
	\begin{equation}
		\begin{aligned}
			x' & = x - \frac{\alpha + \alpha'}{2} \\
			y' &= y - \frac{\beta + \beta'}{2}
		\end{aligned}
	\end{equation}
	is 
	\begin{equation}
		\label{eqn:lem:hyperbola_rectangle}
		(x')^2 - \frac{\mathscr{A}^2  }{4} = (y')^2 - \frac{\mathscr{B}^2}{4} .
	\end{equation}
	It follows that for $(x, y) \in H$, 
	\begin{equation}
		(x, y) \in R \iff (\ref{eqn:lem:hyperbola_rectangle}) \leq 0 \iff x \in  [\alpha \wedge \alpha', \alpha \vee \alpha' ] \text{ or } y \in [\beta \wedge \beta', \beta \vee \beta'] \,.
	\end{equation}
	Similarly, 
	\begin{equation}
		(x, y) \in \interior{R} \iff (\ref{eqn:lem:hyperbola_rectangle}) < 0 \iff x \in  (\alpha \wedge \alpha', \alpha \vee \alpha') \text{ or } y \in (\beta \wedge \beta', \beta \vee \beta') \,.
	\end{equation}	
	Alternatively, the equation of the hyperbola is:
	\begin{equation}
		\Re \left( \left( z - \frac{\alpha + \alpha'}{2} - i \frac{\beta + \beta'}{2}  \right)^2 \right) =   \frac{\mathscr{A}^2 - \mathscr{B}^2}{4} \,.
	\end{equation}
	If $z \in H$, then $z \in R$ if and only if
	\begin{equation}
		\Abs{\Im \left( \left( z - \frac{\alpha + \alpha'}{2} - i \frac{\beta + \beta'}{2}  \right)^2\right)}  \leq \frac{\Abs{\mathscr{A} \mathscr{B}}}{2} \,.
	\end{equation}
\end{lemma}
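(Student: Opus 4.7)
The plan is to reduce everything to elementary algebra in the centered coordinates $x' = x - (\alpha + \alpha')/2$ and $y' = y - (\beta + \beta')/2$, in which the symmetry of the hyperbola becomes transparent. All the stated equivalences are then manipulations of a single quadratic identity.

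First I would verify the product form $(x-\alpha)(x-\alpha') = (y-\beta)(y-\beta')$ by direct expansion, using the identity $((\alpha+\alpha')/2)^2 - ((\alpha' - \alpha)/2)^2 = \alpha \alpha'$ (and the analogous one for $\beta$) to absorb the constants on the right-hand side of the defining equation of $H$. The second form, $(x')^2 - \mathscr{A}^2/4 = (y')^2 - \mathscr{B}^2/4$, is then just the change of variables.

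Next, for the membership characterization I would observe that the centered-coordinate equation forces the two quantities $(x')^2 - \mathscr{A}^2/4$ and $(y')^2 - \mathscr{B}^2/4$ to be equal on $H$, so they are simultaneously $\leq 0$ or simultaneously $> 0$. Since $x \in [\alpha \wedge \alpha', \alpha \vee \alpha']$ is equivalent to $|x'| \leq |\mathscr{A}|/2$, i.e.\ to $(x')^2 - \mathscr{A}^2/4 \leq 0$, and similarly for $y$, it follows that on $H$ the conditions ``$x \in [\alpha \wedge \alpha', \alpha \vee \alpha']$'', ``$y \in [\beta \wedge \beta', \beta \vee \beta']$'', and $(x,y) \in R$ (which by definition demands both) are mutually equivalent. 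This is what makes the ``or'' in the statement legitimate. The $\interior{R}$ version is identical with strict inequalities.

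Finally, for the complex-variable reformulation, write $c = (\alpha+\alpha')/2 + i (\beta+\beta')/2$ and $z - c = x' + i y'$, so that $\Re((z-c)^2) = (x')^2 - (y')^2$ and $\Im((z-c)^2) = 2 x' y'$; the first identity is immediate. For the imaginary-part criterion, one direction is obvious since $z \in H \cap R$ gives $|x'| \leq |\mathscr{A}|/2$ and $|y'| \leq |\mathscr{B}|/2$, hence $|x' y'| \leq |\mathscr{A} \mathscr{B}|/4$. For the converse, if $(x')^2 > \mathscr{A}^2/4$ then the hyperbola equation forces $(y')^2 > \mathscr{B}^2/4$, and multiplying gives $|x' y'| > |\mathscr{A} \mathscr{B}|/4$, which is the contrapositive. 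The only non-trivial ingredient anywhere is this coupling of the two coordinates through the hyperbola equation; beyond that, no genuine obstacle arises, as the lemma is purely an algebraic dictionary between the various ways of describing $H$ and $H \cap R$.
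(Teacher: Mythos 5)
Your proposal is correct and follows essentially the same route as the paper: pass to the centered coordinates $x', y'$, observe that on $H$ the quantities $(x')^2 - \mathscr{A}^2/4$ and $(y')^2 - \mathscr{B}^2/4$ coincide (which is what makes the ``or'' characterization legitimate), and then read off the rectangle criterion from $\Im\bigl((z-c)^2\bigr) = 2 x' y'$. The only cosmetic difference is that you phrase the reverse implication for the $\Im$ criterion as an explicit contrapositive, whereas the paper states the forward implication chain; the content is identical.
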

\begin{proof}
	The equivalent equations for the hyperbola are straightforward to check. 
	
	The equivalences for the closed conditions follow from the following equivalences and the equation of the hyperbola in $x', y'$ coordinates
	\begin{equation}
		\begin{aligned}
			(x, y) \in R & \iff (x')^2 - \frac{\mathscr{A}^2}{4} \leq 0 \text{ and } (y')^2 - \frac{\mathscr{B}^2}{4} \leq 0 \\
		\end{aligned}
	\end{equation}
	\begin{equation}
		\begin{aligned}
			(x')^2 - \frac{\mathscr{A}^2  }{4} \leq 0  & \iff \Abs{x'} \leq \frac{\Abs{\mathscr{A}}}{2} & \iff x \in [\alpha \wedge \alpha', \alpha \vee \alpha' ]  \\
			(y')^2 - \frac{\mathscr{B}^2}{4} \leq 0 & \iff  \Abs{y'} \leq \frac{\Abs{\mathscr{B}}}{2} & \iff y \in [\beta \wedge \beta', \beta \vee \beta']  \,.
		\end{aligned}
	\end{equation}
	The equivalences for the open conditions follow from similar equivalences with the closed conditions replaced by open conditions.
	
	The last equation of the hyperbola follows from direct computation. For the inequality of the rectangle, observe that
	\begin{equation}
		\Im \left( \left( z - \frac{\alpha + \alpha'}{2} - i \frac{\beta + \beta'}{2}  \right)^2  \right)  = 2 x' y' \,.
	\end{equation} 
	In light of what was previously shown, 
	\begin{equation}
		x' y' \leq \frac{\Abs{\mathscr{A} \mathscr{B}}}{4} \Longrightarrow x' \leq \frac{\Abs{\mathscr{A}}}{2} \text{ or } y' \leq \frac{\Abs{\mathscr{B}}}{2} \Longrightarrow z \in R \,.
	\end{equation}
	Conversely, 
	\begin{equation}
		z \in R \Longrightarrow x' \leq \frac{\Abs{\mathscr{A}}}{2} \text{ and } y' \leq \frac{\Abs{\mathscr{B}}}{2} \Longrightarrow x' y' \leq \frac{\Abs{\mathscr{A} \mathscr{B}}}{4} \,.
	\end{equation}
\end{proof}

\subsection{Hermitization method}
In this subsection, we review the Hermitization method and give an outline of the steps of the proof of Theorem \ref{thm:convergence_brown_measure}. We refer to \cite{TaoBook} and \cite{GuionnetPaper} for the justification of facts stated without proof. 

For the Hermitization method, we consider the logarithmic potential of compactly supported, positive measure on $\C$: 

\begin{definition}
	Let $\mu$ be a compactly supported, positive Borel measure on $\C$. Then, the \textbf{logarithmic potential} of $\mu$, $L_\mu: \C \to \C$, is given by: 
\begin{equation}
	L_\mu(z) = \int_{\C}^{} \log \Abs{z - w} \, d \mu(w) \,.
\end{equation}
\end{definition}
Hence, if $\mu$ is the Brown measure of $x \in (M, \tau)$, then 
\begin{equation}
	L_\mu(z) = \log \Delta(z - x)  = \int_{0}^{\infty} \log t \, d \mu_{\Abs{z - x}}(t) = \frac{1}{2} \int_{0}^{\infty} \log t \, d \mu_{\Abs{z - x}^2}(t)\,.
\end{equation}
In particular, this identity holds when $x = X_n$ is a random matrix, where the Brown measure $\mu = \mu_n$ is the empirical spectral distribution of $X_n$.

Since $\mu$ is compactly supported, then $L_\mu \in L^{1}_\text{loc}(\C)$ (with respect to the Lebesgue measure). When $\mu = \mu_n$ is the empirical spectral distribution of a random matrix, this follows from the fact that $\log \Abs{\cdot - w} \in L^1_\text{loc}(\C)$ for any $w \in \C$. In general, $L_\mu \in L^1_\text{loc}(\C)$ follows from the fact that $\log \Abs{\cdot - w} \in L^1_\text{loc}(\C)$ uniformly for $w$ in a compact set. 

In particular, $L_\mu$ is a well-defined distribution and since $\frac{1}{2\pi} \nabla^2 \log \Abs{\cdot - w} = \delta_{w}$, then
\begin{equation}
	\frac{1}{2 \pi} \nabla^2  L_\mu = \mu \, ,
\end{equation}
so we can recover $\mu$ from $L_\mu$.  

In the context of Theorem \ref{thm:convergence_brown_measure}, let $\mu_n$ be the empirical spectral distribution of the random matrices $X_n$ and let $\mu$ be the Brown measure of $X = p + i q$. Then, if we can prove the limit: 
\begin{equation}
	L_{\mu_n}(z) \to L_\mu(z) \, ,
\end{equation}
then we should be able to take distributional Laplacians and conclude that $\mu_n \to \mu$. 

The following Proposition \cite[Theorem 2.8.3]{TaoBook} makes this precise: 
\begin{proposition}
	\label{prop:log_potential_convergence}
	Let $\mu_n$ be a sequence of random probability measures on $\C$ and suppose that $\mu$ is a deterministic probability measure on $\C$. Assume that $\mu_n, \mu$ are almost surely supported on some compact set. Suppose for Lebesgue almost every $z \in \C$, $L_{\mu_n}(z) \to L_\mu(z)$ almost surely (resp. in probability). Then, $\mu_n$ converges almost surely (resp. in probability) in the vague topology. 
\end{proposition}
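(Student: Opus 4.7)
The plan is to first reduce the almost-sure case to a deterministic statement via Fubini, and then invoke compactness and uniqueness of the logarithmic potential. For the a.s.\ case, Fubini applied to Lebesgue measure and the underlying probability measure converts the hypothesis ``for Lebesgue-a.e.\ $z$, $L_{\mu_n}(z) \to L_\mu(z)$ a.s.'' into ``almost surely, $L_{\mu_n}(z) \to L_\mu(z)$ for Lebesgue-a.e.\ $z$.'' Working on this full-probability event reduces the claim to showing: for deterministic probability measures $\nu_n$ supported in a fixed compact $K \subset \C$ with $L_{\nu_n} \to L_\mu$ pointwise a.e., one has $\nu_n \to \mu$ vaguely. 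Since probability measures on $K$ form a vaguely sequentially compact set (Prokhorov), every subsequence of $\{\nu_n\}$ has a further subsequence $\nu_{n_k}$ converging vaguely to some probability measure $\nu$ on $K$, and it suffices to show $\nu = \mu$.

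The key identification proceeds by showing two distributional convergences of the potentials. First, the vague convergence $\nu_{n_k} \to \nu$ implies $L_{\nu_{n_k}} \to L_\nu$ in $\mathcal{D}'(\C)$: for any test function $\phi \in C_c^\infty(\C)$, Fubini gives
\begin{equation}
	\int \phi(z)\, L_{\nu_{n_k}}(z)\, dz = \int \psi(w)\, d\nu_{n_k}(w), \qquad \psi(w) := \int \phi(z) \log \Abs{z - w}\, dz,
\end{equation}
where $\psi$ is continuous and bounded on $K$ (continuity following from uniform local integrability of $\log \Abs{\cdot - w}$ in $w$), so passing to the vague limit yields $\int \psi\, d\nu = \int \phi L_\nu\, dz$. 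Second, the pointwise a.e.\ convergence $L_{\nu_n} \to L_\mu$, together with the uniform $L^1_{\text{loc}}$ bound
\begin{equation}
	\int_B \Abs{L_{\nu_n}(z)}\, dz \leq \int_K \left( \int_B \Abs{\log \Abs{z - w}}\, dz \right) d \nu_n(w) \leq C_{B, K}
\end{equation}
(again by Fubini, using uniform local integrability of $\log \Abs{\cdot - w}$), upgrades via Vitali's convergence theorem to $L^1_{\text{loc}}$ convergence, hence distributional convergence to $L_\mu$. The two distributional limits must coincide, giving $L_\mu = L_\nu$ as distributions; taking $\frac{1}{2\pi}$ times the distributional Laplacian yields $\mu = \nu$.

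For the in-probability case, the subsubsequence criterion reduces to the a.s.\ case. Given any subsequence of $\{\mu_n\}$, one applies Fubini and bounded convergence to the bounded truncation $d_B(\mu, \mu') := \int_B \min(\Abs{L_\mu - L_{\mu'}}, 1)\, dz$ on a ball $B$ containing $K$, deducing $E[d_B(\mu_{n_j}, \mu)] \to 0$, and hence extracting a further subsubsequence along which $L_{\mu_{n_{j_k}}} \to L_\mu$ almost surely in $L^1_{\text{loc}}$; the deterministic argument then applies. The main obstacle I anticipate is the uniform-integrability upgrade from pointwise a.e.\ to $L^1_{\text{loc}}$ convergence: because $\log \Abs{z - w}$ has a logarithmic singularity, the potentials $L_{\nu_n}$ can plunge to $-\infty$ where $\nu_n$ concentrates mass, so one must use the uniform $L^1_{\text{loc}}$ bound crucially (via Vitali's theorem rather than naive dominated convergence) to rule out loss of mass and ensure that the a.e.\ pointwise limit genuinely represents the distributional limit.
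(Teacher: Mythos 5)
The paper does not supply its own proof of this proposition; it cites it directly as \cite[Theorem 2.8.3]{TaoBook}. Your proof is correct and fills that gap with a self-contained argument along the standard lines: reduce the almost-sure case to a deterministic one by Fubini, use compactness of probability measures on a fixed compact set to extract a vaguely convergent subsubsequence $\nu_{n_k} \to \nu$, show that the logarithmic potentials converge in the sense of distributions both to $L_\nu$ (from vague convergence, pairing with the continuous convolution $\psi = \phi * \log|\cdot|$) and to $L_\mu$ (from the pointwise a.e.\ hypothesis plus uniform integrability), and conclude $\nu = \mu$ by applying $\tfrac{1}{2\pi}\nabla^2$. The in-probability case via the subsubsequence criterion applied to the bounded metric $d_B$ is also sound. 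You are right that the genuine technical content is the passage from pointwise a.e.\ convergence of $L_{\nu_n}$ to $L^1_{\text{loc}}$ (hence distributional) convergence, and that a uniform $L^1_{\text{loc}}$ bound alone is not enough; what you actually have — and use, even if the displayed inequality only shows the $L^1$ bound over a fixed ball $B$ — is genuine uniform integrability, since $\int_A |L_{\nu_n}| \, dz \leq \sup_{w \in K} \int_{A-w} |\log|\zeta||\,d\zeta$ tends to $0$ as $|A| \to 0$ by absolute continuity of the Lebesgue integral of $\log|\cdot|$ on the bounded set $B-K$. With that small clarification made explicit, Vitali applies exactly as you describe, and the argument is complete.
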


As in our case, the boundedness hypothesis in Proposition \ref{prop:log_potential_convergence} is typically not an issue. 

We highlight some notation that we will use for the rest of the paper: 

\begin{definition}
	Let $X_n$ be the random matrix model from Definition \ref{def:X_n}, and suppose that the law of $P_n$ converges to the law of $p$ and the law of $Q_n$ converges to the law of $q$, where $p, q \in (M, \tau)$ are freely independent. Then, define the following quantities: 
	\begin{itemize}
		\item Let $\bm{H_z(X_n)} = (z - X_n)^*(z - X_n)$.
		\item Let $\bm{\nu_{n, z}}$ be the spectral measure of $H_z(X_n)$.
		\item Let $\bm{H_z(X)} = (z - X)^*(z - X)$.
		\item Let $\bm{\nu_z}$ be the spectral measure of $H_z(X)$.
		\item Let $\bm{\mu_n}$ be empirical spectral distribution of $X_n$.
		\item Let $\bm{\mu}$ be the Brown measure of $X = p + i q$.
	\end{itemize}
\end{definition}

Thus, to prove Theorem \ref{thm:convergence_brown_measure}, it suffices to prove the convergence of the following integrals for almost every $z \in \C$ almost surely: 
\begin{equation}
	\frac{1}{2} \int_{0}^{\infty} \log x \, d \nu_{n, z}(x) \to \frac{1}{2} \int_{0}^{\infty} \log x \, d \nu_{z}(x) \,.
\end{equation}
In general, the strength of the convergence of these integrals (i.e. whether the convergence is in probability or almost surely) determines the strength of the convergence of $\mu_n$ to $\mu$.

The main difficulty in justifying the convergence of the logarithmic integrals is the fact that $\log x$ is not continuous at $0$: $\log x \to - \infty$ as $x \to 0^-$. So, the logarithmic integrals may not converge if the measures $\nu_{n, z}$ have too much mass near $0$. Note that the measure $\nu_{n, z}$ is supported on $[\sigma_{\min}(z - X_n)^2, \norm{z - X_n}]$, so in order to prove the convergence of the logarithmic integrals, it suffices bound the minimum singular value from below. 

In practice, this is the most difficult part of these convergence arguments. For our $X_n$, we use the algebraic properties of projections and the geometric properties of the support of the $X_n$.

To summarize, in order to prove Theorem \ref{thm:convergence_brown_measure}, we need to complete the following steps:

\begin{enumerate}
	\item Show that for Lebesgue almost every $z \in \C$, the spectral measures $\nu_{n, z}$ converge to $\nu_z$ almost surely in the vague topology.
	\item Bound the minimum singular value of $z - X_n$ from below to justify the convergence
	\begin{equation}
		\frac{1}{2} \int_{0}^{\infty} \log x \, d \nu_{n, z}(x) \to \frac{1}{2} \int_{0}^{\infty} \log x \, d \nu_{z}(x) 
	\end{equation}
	for Lebesgue almost every $z \in \C$ almost surely. 
\end{enumerate}

We will prove these steps exclusively in the case where $p$ and $q$ have $2$ atoms (resp. $P_n$ and $Q_n$ have $2$ atoms), even if the results can be generalized for the case when $p$ or $q$ is constant (resp. $P_n$ or $Q_n$ is constant). The exceptional cases will be handled in the final proof of Theorem \ref{thm:convergence_brown_measure}.

In the case $p$ and $q$ have spectra consisting of $2$ atoms, we use the following notation for their spectral measures:

\begin{definition}
	\label{def:p_q_spectra}
	If $p$ and $q$ have spectra consisting of $2$ atoms, let
	\begin{equation}
		\begin{aligned}
			\bm{\mu_p} & = a \delta_\alpha + (1 - a) \delta_{\alpha'} \\
			\bm{\mu_q} &= b \delta_\beta + (1 - b) \delta_{\beta'} \,,
		\end{aligned}
	\end{equation}
	where $a, b \in (0, 1)$, $\alpha, \alpha', \beta, \beta' \in \R$, $\alpha \neq \alpha'$, and $\beta \neq \beta'$.
\end{definition}

In this situation, if the law of $P_n$ converges to the law of $p$ and the law of $Q_n$ converges to the law of $q$, then the weights and positions of the atoms of $P_n$ (resp. $Q_n$) converge to the weights and positions of the atoms of $p$ (resp. $q$): by testing the convergence $\mu_{P_n} \to \mu_p$ on non-negative $f \in C_c(\R)$ that is supported on a neighborhood of $\alpha$ and $f(\alpha) = 1$, then an atom of $\mu_{P_n}$ converges to $\alpha$. A similar argument shows that an atom of $\mu_{P_n}$ converges to $\alpha'$. By choosing $f$ to be $1$ on neighborhoods of $\alpha$ and $\alpha'$, we see that the weights of the atoms of $P_n$ converge to the weights of the corresponding atoms of $p$. Thus, we may assume that $\alpha_n \to \alpha$, $\beta_n \to \beta$, $\alpha_n' \to \alpha'$, $\beta_n' \to \beta'$, $a_n \to a$, and $b_n \to b$.

\section{Convergence of Spectral Measures}
\label{sec:spec_measures_converge}
In this section, we complete the first step of the outline of the proof of Theorem \ref{thm:convergence_brown_measure}, proving the convergence of $\nu_{n, z}$ to $\nu_z$ for almost every $z \in \C$, almost surely in the vague topology. We do this in the case where $p$ and $q$ have spectra consisting of $2$ atoms, and use the notation of Definition \ref{def:p_q_spectra}. Recall that in this situation, $\alpha_n \to \alpha$, $\beta_n \to \beta$, $\alpha_n' \to \alpha'$, $\beta_n' \to \beta'$, $a_n \to a$, and $b_n \to b$.

By making a choice of commuting $P_n', Q_n'$ in Definition \ref{def:X_n}, we deduce the following result from the asymptotic freeness of independent Haar unitaries:

\begin{proposition}
	\label{prop:projections_free}
	The joint law of $P_n, Q_n$ in $\left( M_n(\C), \mathbb{E}\left[ \frac{1}{n} \tr   \right]  \right)$ \\(resp. $ \left(M_n(\C), \frac{1}{n} \tr \right) $) is asymptotically free (resp. almost surely asymptotically free), converging to the law of two free Hermitian operators $p, q \in (M, \tau)$ where 
	\begin{equation}
		\begin{aligned}
			\mu_p & = a \delta_\alpha + (1 - a) \delta_{\alpha'} \\
			\mu_q &= b \delta_\beta + (1 - b) \delta_{\beta'} \,.
		\end{aligned}
	\end{equation}	
\end{proposition}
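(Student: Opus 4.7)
The plan is to leverage Voiculescu's asymptotic freeness theorem (\cite{Voiculescu1991}) after using Haar-invariance to pass to a commuting model for the deterministic parts $P_n', Q_n'$. First observe that the joint law of $(P_n, Q_n) = (U_n P_n' U_n^*, V_n Q_n' V_n^*)$ depends on $P_n', Q_n'$ only through their unitary equivalence classes (equivalently, their spectra $\mu_{P_n'}, \mu_{Q_n'}$). Indeed, for any fixed $W_1, W_2 \in U(n)$ the right-invariance of Haar measure and the independence of $U_n, V_n$ yield $(U_n W_1, V_n W_2) \stackrel{d}{=} (U_n, V_n)$, and choosing $W_1, W_2$ to diagonalize $P_n'$ and $Q_n'$ gives a statistically equivalent model in which both deterministic matrices are diagonal and hence commute. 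After arranging the eigenvalues in any convenient block pattern, the joint $*$-distribution of $(P_n', Q_n')$ in $(M_n(\C), \frac{1}{n}\tr)$ converges to that of a pair $(p', q')$ of commuting, bounded self-adjoint operators in some tracial von Neumann algebra, with $\mu_{p'} = \mu_p$ and $\mu_{q'} = \mu_q$.

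Next I would invoke Voiculescu's theorem in both its in-expectation and almost-sure forms to conclude that the three families $\{U_n\}$, $\{V_n\}$, and $\{P_n', Q_n'\}$ are asymptotically free, converging jointly to $\{u\}, \{v\}, \{p', q'\}$ in $(M, \tau)$, where $u, v$ are free Haar unitaries and the three families are freely independent. Define $p := u p' u^*$ and $q := v q' v^*$; these are self-adjoint, and unitary conjugation preserves spectra, so $\mu_p = \mu_{p'}$ and $\mu_q = \mu_{q'}$ match the desired laws. The joint limit of $(P_n, Q_n)$ is then identified with $(p, q)$.

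Finally, to verify that $p$ and $q$ are freely independent, I would expand a mixed moment $\tau(R_1(p) S_1(q) \cdots R_k(p) S_k(q))$ with polynomials satisfying $\tau(R_i(p)) = \tau(S_i(q)) = 0$. The trace property gives $\tau(R_i(p')) = \tau(u^* R_i(p) u) = \tau(R_i(p)) = 0$ and likewise $\tau(S_i(q')) = 0$, so the moment rewrites as
\[
\tau\bigl(u R_1(p') u^*\, v S_1(q') v^* \cdots u R_k(p') u^*\, v S_k(q') v^*\bigr),
\]
which is a word alternating among the three free subalgebras generated by $\{u\}$, $\{v\}$, and $\{p', q'\}$, each factor being of trace zero (using $\tau(u) = \tau(u^*) = \tau(v) = \tau(v^*) = 0$ for Haar unitaries). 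Freeness of the three families then forces the expression to vanish, establishing freeness of $p$ and $q$. The only delicate point in the entire argument is confirming the alternation pattern between the three subalgebras in this mixed word; the remainder is a routine assembly of standard facts, so I do not anticipate a serious obstacle.
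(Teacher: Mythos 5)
Your proposal is correct and follows essentially the same strategy as the paper: reduce to a commuting choice of deterministic parts $P_n', Q_n'$ via Haar invariance, invoke Voiculescu's asymptotic freeness of independent Haar unitaries from constant matrices, and identify the limit as $p = u p' u^*$, $q = v q' v^*$. You are more explicit than the paper about the alternating-moments verification that $p$ and $q$ are free (which the paper leaves as ``easy to check''), while the paper is more explicit about the convergence of the joint law of $(P_n', Q_n')$, which it establishes by a concrete choice of nested projections.
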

\begin{proof}
	Recall that $P_n = U_n P_n' U_n^*$, $Q_n = V_n Q_n' V_n^*$, where $U_n, V_n$ are independent Haar-distributed unitaries and $P_n', Q_n'$ are deterministic, Hermitian, and 
	\begin{equation}
		\begin{aligned}
			\mu_{P_n'} & = a_n \delta_{\alpha_n} + (1 - a_n) \delta_{\alpha_n'} \\
			\mu_{Q_n'} &= b_n \delta_{\beta_n} + (1 - b_n) \delta_{\beta_n'} \,.
		\end{aligned}
	\end{equation}
	Because of the invariance property of the Haar measure, any choice of the $P_n'$ (resp. $Q_n'$) gives the same law for $P$ (resp. $Q$). Thus, let $P_n', Q_n'$ be given by: 
	\begin{equation}
		\begin{aligned}
			P_n' & = (\alpha_n' - \alpha_n) \tilde{P_n} + \alpha_n \\
			Q_n' &= (\beta_n'- \beta_n) \tilde{Q_n} + \beta_n \,,
		\end{aligned}
	\end{equation}
	where $\tilde{P_n}$ is the matrix of the projection onto the first $n (1 - a_n)$ standard basis vectors of $\C^n$ and $\tilde{Q_n}$ is the matrix of the projection onto the first $n (1 - b_n)$ standard basis vectors of $\C^n$. 
	
	Then, $\tau(\tilde{P_n}) = 1 - a_n$, $\tau(\tilde{Q_n}) = 1 - b_n$ and 
	\begin{equation}
		\begin{aligned}
			\tilde{P_n} & = \tilde{P_n} \wedge \tilde{Q_n} + \tilde{P_n} \wedge (1 - \tilde{Q_n}) \\
			\tilde{Q_n} &= \tilde{P_n} \wedge \tilde{Q_n} + (1 - \tilde{P_n}) \wedge \tilde{Q_n}
		\end{aligned}
	\end{equation}
	\begin{equation}
		\begin{aligned}
			\tau(\tilde{P_n} \wedge \tilde{Q_n}) & = \min(\tau(\tilde{P_n}), \tau(\tilde{Q_n})) \\
			\tau(\tilde{P_n} \wedge (1 - \tilde{Q_n})) &= \tau(\tilde{P_n}) - \tau(\tilde{P_n} \wedge \tilde{Q_n}) \\
			\tau( (1 - \tilde{P_n}) \wedge \tilde{Q_n} ) &= \tau(\tilde{Q_n}) - \tau(\tilde{P_n} \wedge \tilde{Q_n})  \\
			\tau( (1 - \tilde{P_n}) \wedge (1 - \tilde{Q_n}) ) &= 1 - \max(\tau(\tilde{P_n}), \tau(\tilde{Q_n})) \,.
		\end{aligned}
	\end{equation}
	The projections $\tilde{P_n} \wedge \tilde{Q_n},  \tilde{P_n} \wedge (1 - \tilde{Q_n}), (1 - \tilde{P_n}) \wedge \tilde{Q_n}, (1 - \tilde{P_n}) \wedge (1 - \tilde{Q_n})$ are mutually orthogonal and sum to $1$. Hence, their joint law converges to the joint law of mutually orthogonal projections $e_{1 1}, e_{1 0}, e_{0 1}, e_{00}$ that sum to $1$, where
	\begin{equation}
		\begin{aligned}
			\tau(e_{11}) & = \min( 1 - a, 1 - b ) \\
			\tau(e_{1 0}) & = 1 - a - \min( 1 - a, 1 - b ) \\
			\tau(e_{0 1}) & = 1 - b - \min( 1 - a, 1 - b ) \\
			\tau(e_{0 0}) & = 1 - \max(1 - a, 1 - b) \,.
		\end{aligned}
	\end{equation}
	By forming the variables: 
	\begin{equation}
		\begin{aligned}
			\tilde{p} & = e_{1 1} + e_{1 0} \\
			\tilde{q} &= e_{1 1} + e_{0 1} \\
			p' &=  (\alpha' - \alpha)\tilde{p} + \alpha \\
			q' &= (\beta' - \beta) \tilde{q} + \beta \,.
		\end{aligned}
	\end{equation}
	then it follows that the law of $P_n', Q_n'$ converges to the law of $p', q'$, where 
	\begin{equation}
		\begin{aligned}
			\mu_{p'} & = a \delta_\alpha + (1 - a) \delta_{\alpha'} \\
			\mu_{q'} &= b \delta_\beta + (1 - b) \delta_{\beta'} \,.
		\end{aligned}
	\end{equation}	
	Since $P_n', Q_n'$ are deterministic, then they are independent from $U_n, V_n$. Further, $\norm{P_n'} = \max(\alpha_n, \alpha_n')$ and $\norm{Q_n'} = \max(\beta_n, \beta_n')$ and since these sequences converge, then $\norm{P_n'}$, $\norm{Q_n'}$ are uniformly bounded. Hence, from the asymptotic freeness of independent Haar unitaries (see \cite[Theorem 5.4.10]{GuionnetBook}), the law of $P_n', Q_n'$ becomes (almost surely) asymptotically free from $U_n, V_n$. Let $u, v$ be the limit variables for $U_n, V_n$. It is easy to check that $p = u p' u^*$ and $v q' v^*$ are freely independent. Hence, the joint law of $P_n, Q_n$ converges to the law of $p, q$, where $p$ and $q$ are free and
	\begin{equation}
		\begin{aligned}
			\mu_p & = a \delta_\alpha + (1 - a) \delta_{\alpha'} \\
			\mu_q &= b \delta_\beta + (1 - b) \delta_{\beta'} \,.
		\end{aligned}
	\end{equation}
\end{proof}

As a corollary, we observe that the laws $\nu_{n, z}$ converge to $\nu_z$:

\begin{corollary}
	\label{cor:nu_z_convergence}
	Let $\nu_{n, z}$ be the spectral measure of $H_z(X_n) = (z - X_n)^*(z - X_n)$ and let $\nu_z$ be the spectral measure of $H_z(X) = (z - X)^* (z - X)$. For every $z \in \C$, $\nu_{n, z}$ converges to $\nu_z$ almost surely in the vague topology.
\end{corollary}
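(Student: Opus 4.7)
The plan is to realize $H_z(X_n)$ as a fixed (complex-coefficient) non-commutative $\ast$-polynomial in the pair $(P_n,Q_n)$ and then apply the almost-sure asymptotic freeness from Proposition \ref{prop:projections_free} together with a standard moment-method argument.

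First, I would expand the Hermitization. Writing $z=x+iy$ and using that $P_n,Q_n$ are Hermitian,
\begin{equation}
	H_z(X_n) = (z-X_n)^*(z-X_n) = (x-P_n)^2+(y-Q_n)^2 + i[P_n,Q_n],
\end{equation}
and analogously $H_z(X) = (x-p)^2+(y-q)^2+i[p,q]$. Thus $H_z(X_n)=F_z(P_n,Q_n)$ and $H_z(X)=F_z(p,q)$ for one and the same self-adjoint noncommutative polynomial $F_z$ in two Hermitian variables.

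Next I would apply Proposition \ref{prop:projections_free}. That proposition gives almost surely, for every noncommutative polynomial $W$ in two variables,
\begin{equation}
	\tfrac{1}{n}\tr\bigl(W(P_n,Q_n)\bigr)\longrightarrow \tau\bigl(W(p,q)\bigr).
\end{equation}
Taking $W=F_z^{k}$ for each integer $k\geq 0$ and intersecting the countably many full-probability events (one per $k$), we get that almost surely
\begin{equation}
	\int_0^\infty t^{k}\,d\nu_{n,z}(t) = \tfrac{1}{n}\tr\bigl(H_z(X_n)^{k}\bigr) \longrightarrow \tau\bigl(H_z(X)^{k}\bigr) = \int_0^\infty t^{k}\,d\nu_z(t)
\end{equation}
for every $k\geq 0$ simultaneously.

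Finally, I would close with the method of moments. Since $\alpha_n,\alpha_n',\beta_n,\beta_n'$ converge, $\|P_n\|$ and $\|Q_n\|$ are uniformly bounded, so there is a compact interval $K\subset[0,\infty)$ containing $\Supp(\nu_{n,z})$ for all $n$ (depending only on $z$ and the limits). The limit $\nu_z$ is also compactly supported on $K$. Moment convergence of probability measures with a common compact support implies weak (hence vague) convergence, so $\nu_{n,z}\to\nu_z$ almost surely in the vague topology for every $z\in\C$. No serious obstacle is expected; the only things to watch are (i) intersecting the countable family of almost-sure events coming from varying $k$, and (ii) ensuring the uniform boundedness of supports, both of which are routine given what has already been established.
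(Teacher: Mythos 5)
Your argument is correct and is essentially the paper's own proof with the details spelled out: the paper invokes Proposition \ref{prop:projections_free} and notes that almost-sure convergence of all moments of $H_z(X_n)$ to those of $H_z(X)$ gives vague convergence of the spectral measures, which is exactly your method-of-moments step. The explicit expansion $H_z(X_n)=(x-P_n)^2+(y-Q_n)^2+i[P_n,Q_n]$, the countable intersection over $k$, and the uniform boundedness of supports are all implicit in the paper's one-line proof and you have simply made them explicit.
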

\begin{proof}
	This follows immediately from Proposition \ref{prop:projections_free} by noting that the convergence of the laws of $H_z(X_n)$ to $H_z(X)$ (i.e. convergence of all moments) almost surely implies vague convergence of the $\mu_n$ to $\mu$ almost surely. 
\end{proof}

\section{Bounds on the Minimum Singular Value}
\label{sec:min_singular_value}
In this section, we complete the second major step of the proof of Theorem \ref{thm:convergence_brown_measure} in the case when $p$ and $q$ have $2$ atoms. Recall that this implies that for $n$ sufficiently large, $P_n$ and $Q_n$ also have $2$ atoms. We will assume this is true for all $n$, so that in Definition \ref{def:X_n}, $a_n, b_n \in (0, 1)$, $\alpha_n \neq \alpha_n'$, and $\beta_n \neq \beta_n'$. 

We will use the following notation for the rest of the paper:

\begin{definition}
	For $X_n$ as in Definition \ref{def:X_n}, let 
	\begin{equation}
		\begin{aligned}
			\tilde{\bm{P_n}} &= P_n - \frac{\alpha_n + \alpha_n'}{2} \\
			\tilde{\bm{Q_n}} &= Q_n - \frac{\beta_n + \beta_n'}{2} \\
			\tilde{\bm{X_n}} & = \tilde{P_n} + i \tilde{Q_n} \\
			\bm{\mathscr{A}_n} & = \alpha_n' - \alpha_n \\
			\bm{\mathscr{B}_n} &= \beta_n' - \beta_n \,.
		\end{aligned}	
	\end{equation}
\end{definition}

In this section, we will bound the minimum singular value of $z - X_n$ from below, for Lebesgue almost every $z \in \C$.

First, we understand the geometry of the support of $\mu_n$. Analogous to $H$ and $R$ from Definition \ref{def:hyperbola_rectangle}, let $H_n$ and $R_n$ be the hyperbola and rectangle associated with $X_n$: 

\begin{definition}
	\label{def:hyperbola_rectangle_n}
	Let $P_n, Q_n \in M_n(\C)$ be Hermitian with laws: 
	\begin{equation}
		\begin{aligned}
			\mu_{P_n} & = a_n \delta_{\alpha_n} + (1 - a_n) \delta_{\alpha_n'} \\
			\mu_{Q_n} &= b_n \delta_{\beta_n} + (1 - b_n) \delta_{\beta_n'} \,,
		\end{aligned}
	\end{equation}
	where $a_n, b_n \in (0, 1)$, $\alpha_n, \alpha_n', \beta_n, \beta_n' \in \R$, $\alpha_n \neq \alpha_n'$, and $\beta_n \neq \beta_n'$.
	
	Let $\mathscr{A}_n = \alpha_n' - \alpha_n$ and $\mathscr{B}_n = \beta_n' - \beta_n$. 
	
	The \textbf{hyperbola associated with $X_n$} is:
	\begin{equation}
		H_n = \left\lbrace z = x + i y \in \C :	\left( x - \frac{\alpha_n + \alpha_n'}{2}  \right)^2 - \left(  y - \frac{\beta_n + \beta_n'}{2} \right)^2 = \frac{  \mathscr{A}_n^2 - \mathscr{B}_n^2}{4}    \right\rbrace .
	\end{equation}
	The \textbf{rectangle associated with $X_n$} is:
	\begin{equation}
		R_n =  \left\lbrace z = x + i y \in \C : x \in  [\alpha_n \wedge \alpha_n', \alpha_n \vee \alpha_n' ], y \in [\beta_n \wedge \beta_n', \beta_n \vee \beta_n'] \right\rbrace  .
	\end{equation}
\end{definition}

We will show that $\mu_n$ is supported on $H_n \cap R_n$. First, we consider $\tilde{X_n}^2$:

\begin{proposition}
	\label{prop:tilde_X_n^2}
	$\tilde{X_n}^2$ is normal and 
	\begin{equation}
		\begin{aligned}
			\Re(\tilde{X_n}^2) & = \frac{\mathscr{A}_n^2 - \mathscr{B}_n^2}{4} \\
			\norm{\Im(\tilde{X_n}^2)} & \leq \frac{\Abs{\mathscr{A}_n \mathscr{B}_n}}{2} \,.
		\end{aligned}
	\end{equation}
	If $\rho$ is an eigenvalue of $\tilde{X_n}^2$ then 
	\begin{equation}
		\begin{aligned}
			\Re(\rho) &= \frac{\mathscr{A}_n^2 - \mathscr{B}_n^2}{4} \\
			\Abs{\Im(\rho)} &\leq \frac{\Abs{\mathscr{A}_n \mathscr{B}_n}}{2} \,.
		\end{aligned}
	\end{equation}
\end{proposition}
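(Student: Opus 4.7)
The plan is to expand $\tilde{X_n}^2$ into its Hermitian and anti-Hermitian parts and exploit the crucial algebraic fact that both $\tilde{P_n}$ and $\tilde{Q_n}$ have only two eigenvalues, each equal in absolute value.

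First I would observe that since $P_n$ is Hermitian with spectrum $\{\alpha_n, \alpha_n'\}$, the centered operator $\tilde{P_n} = P_n - \frac{\alpha_n + \alpha_n'}{2}$ has spectrum $\{-\mathscr{A}_n/2, \mathscr{A}_n/2\}$, so by the functional calculus
\begin{equation}
\tilde{P_n}^2 = \frac{\mathscr{A}_n^2}{4} \cdot I, \qquad \tilde{Q_n}^2 = \frac{\mathscr{B}_n^2}{4} \cdot I.
\end{equation}
Both $\tilde{P_n}$ and $\tilde{Q_n}$ are Hermitian, so expanding
\begin{equation}
\tilde{X_n}^2 = (\tilde{P_n} + i \tilde{Q_n})^2 = \tilde{P_n}^2 - \tilde{Q_n}^2 + i(\tilde{P_n} \tilde{Q_n} + \tilde{Q_n} \tilde{P_n})
\end{equation}
gives a decomposition into a Hermitian part $\tilde{P_n}^2 - \tilde{Q_n}^2$ and $i$ times the Hermitian anticommutator $\tilde{P_n} \tilde{Q_n} + \tilde{Q_n} \tilde{P_n}$. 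Substituting the scalar identities above yields
\begin{equation}
\Re(\tilde{X_n}^2) = \frac{\mathscr{A}_n^2 - \mathscr{B}_n^2}{4} \cdot I, \qquad \Im(\tilde{X_n}^2) = \tilde{P_n} \tilde{Q_n} + \tilde{Q_n} \tilde{P_n}.
\end{equation}

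Normality of $\tilde{X_n}^2$ is immediate: $\Re(\tilde{X_n}^2)$ is a scalar multiple of the identity, hence commutes with $\Im(\tilde{X_n}^2)$. For the norm bound on the imaginary part, I would use $\norm{\tilde{P_n}} = |\mathscr{A}_n|/2$ and $\norm{\tilde{Q_n}} = |\mathscr{B}_n|/2$ (from the spectrum being two points of equal absolute value) together with the triangle inequality:
\begin{equation}
\norm{\Im(\tilde{X_n}^2)} \leq \norm{\tilde{P_n} \tilde{Q_n}} + \norm{\tilde{Q_n} \tilde{P_n}} \leq 2 \norm{\tilde{P_n}} \norm{\tilde{Q_n}} = \frac{\Abs{\mathscr{A}_n \mathscr{B}_n}}{2}.
\end{equation}

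Finally, for the eigenvalue statement, because $\tilde{X_n}^2$ is normal its real and imaginary parts are simultaneously diagonalizable. Any eigenvalue $\rho$ satisfies $\Re(\rho) \in \operatorname{spec}(\Re(\tilde{X_n}^2))$ and $\Im(\rho) \in \operatorname{spec}(\Im(\tilde{X_n}^2))$, so $\Re(\rho) = \frac{\mathscr{A}_n^2 - \mathscr{B}_n^2}{4}$ is forced by the scalar form of the real part, and $\Abs{\Im(\rho)} \leq \norm{\Im(\tilde{X_n}^2)} \leq \Abs{\mathscr{A}_n \mathscr{B}_n}/2$ follows from the norm bound just established. There is no real obstacle here; the argument is purely algebraic and the entire content is the observation that squaring a two-atom Hermitian operator kills all noncommutativity by collapsing the square to a scalar.
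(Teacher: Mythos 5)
Your proof is correct and follows essentially the same route as the paper's: compute $\tilde{P_n}^2$ and $\tilde{Q_n}^2$ as scalars, expand $\tilde{X_n}^2$ to identify its Hermitian and anti-Hermitian parts, conclude normality from the real part being scalar, bound the imaginary part by the operator norms of $\tilde{P_n}$ and $\tilde{Q_n}$, and read off the eigenvalue constraints from simultaneous diagonalization. No discrepancy worth noting.
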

\begin{proof}
	Observe that $\tilde{P_n}^2 = \mathscr{A}_n^2 / 4$ and $\tilde{Q_n}^2 = \mathscr{B}_n^2 / 4$.
	
	Hence, 
	\begin{equation}
		\begin{aligned}
			\tilde{X_n}^2 & = (\tilde{P_n} + i \tilde{Q_n})^2
			\\ &= (\tilde{P_n}^2 -  \tilde{Q_n}^2 ) + i (\tilde{P_n} \tilde{Q_n} + \tilde{Q_n} \tilde{P_n})
			\\ &= \frac{\mathscr{A}_n^2 - \mathscr{B}_n^2}{4} + i (\tilde{P_n} \tilde{Q_n} +  \tilde{Q_n} \tilde{P_n}) \,.
		\end{aligned}  
	\end{equation}
	As $\tilde{P_n} \tilde{Q_n} + \tilde{P_n} \tilde{Q_n}$ is Hermitian then 
	\begin{equation}
		\begin{aligned}
			\Re(\tilde{X_n}^2) & = \frac{\mathscr{A}_n^2 - \mathscr{B}_n^2}{4} \\
			\Im(\tilde{X_n}^2) &= \tilde{P_n} \tilde{Q_n} + \tilde{Q_n} \tilde{P_n} \,.
		\end{aligned}
	\end{equation}	
	Note that from the spectra of $\tilde{P_n}, \tilde{Q_n}$ that $\norm{\tilde{P_n}} = \Abs{\mathscr{A}_n} / 2$ and $\norm{\tilde{Q_n}} = \Abs{\mathscr{B}_n} / 2$. Hence, 
	\begin{equation}
		\begin{aligned}
			\norm{\Im(\tilde{X_n}^2)} &= \norm{\tilde{P_n} \tilde{Q_n} + \tilde{Q_n} \tilde{P_n}} \\
			& \leq \norm{\tilde{P_n}} \norm{\tilde{Q_n}} + \norm{\tilde{Q_n}} \norm{\tilde{P_n}} \\
			& = \frac{\Abs{\mathscr{A}_n \mathscr{B}_n}}{2} \,.
		\end{aligned}
	\end{equation}
	Since $\Re(\tilde{X_n})$ is a constant, then clearly it commutes with $\Im(\tilde{X_n})$ and it follows that $\tilde{X_n}$ is normal. Hence, $\tilde{X_n}$ is diagonalizable and its eigenvalues are of the form $\rho = \rho_1 + i \rho_2$, where $\rho_1$ is an eigenvalue of $\Re(\tilde{X_n})$ and $\rho_2$ is an eigenvalue of $\Im(\tilde{X_n})$. Thus,
	\begin{equation}
		\begin{aligned}
			\Re(\rho)  &= \frac{\mathscr{A}_n^2 - \mathscr{B}_n^2}{4} \\
			\Abs{\Im(\rho)} & \leq \norm{\Im(\tilde{X_n})} \leq \frac{\Abs{\mathscr{A}_n \mathscr{B}_n}}{2} \,.
		\end{aligned}
	\end{equation}
\end{proof}

We conclude that the eigenvalues of $X_n = P_n + i Q_n$ lie on $H_n \cap R_n$.

\begin{proposition}
	\label{prop:x_eigenvalue}
	The eigenvalues of $X_n$ lie on $H_n \cap R_n$.
\end{proposition}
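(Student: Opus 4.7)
The plan is to reduce the proposition to Proposition \ref{prop:tilde_X_n^2} via the shift relating $X_n$ to $\tilde{X}_n$, and then invoke the alternative ``$\Re/\Im$ of a shifted square'' characterization of $H_n$ and $R_n$ from Lemma \ref{lem:hyperbola_rectangle}. Write $c_n = \frac{\alpha_n + \alpha_n'}{2} + i \frac{\beta_n + \beta_n'}{2}$, so that $X_n = \tilde{X}_n + c_n$. If $\lambda \in \C$ is an eigenvalue of $X_n$, then $\tilde{\lambda} = \lambda - c_n$ is an eigenvalue of $\tilde{X}_n$, and squaring, $\tilde{\lambda}^2$ is an eigenvalue of $\tilde{X}_n^2$.

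Next, I would apply Proposition \ref{prop:tilde_X_n^2} to the eigenvalue $\tilde{\lambda}^2$ of $\tilde{X}_n^2$. This immediately yields
\begin{equation}
    \Re(\tilde{\lambda}^2) = \frac{\mathscr{A}_n^2 - \mathscr{B}_n^2}{4}, \qquad \Abs{\Im(\tilde{\lambda}^2)} \leq \frac{\Abs{\mathscr{A}_n \mathscr{B}_n}}{2}.
\end{equation}
Since $\tilde{\lambda}^2 = (\lambda - c_n)^2$, the first identity is exactly the last form of the equation of $H_n$ given in Lemma \ref{lem:hyperbola_rectangle} (applied with parameters $\alpha_n, \alpha_n', \beta_n, \beta_n'$), so $\lambda \in H_n$. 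Given $\lambda \in H_n$, the inequality above is precisely the criterion from Lemma \ref{lem:hyperbola_rectangle} that places $\lambda$ in $R_n$. Therefore $\lambda \in H_n \cap R_n$.

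There is essentially no obstacle here; the entire content was packed into Proposition \ref{prop:tilde_X_n^2} and Lemma \ref{lem:hyperbola_rectangle}. The only thing to be mildly careful about is the bookkeeping of the shift by $c_n$: one must verify that the alternative formulations of $H_n$ and $R_n$ in Lemma \ref{lem:hyperbola_rectangle} are stated in the centered variable $z - c_n$, which matches $\tilde{\lambda}$ exactly. Once this is observed, the proposition follows in a few lines.
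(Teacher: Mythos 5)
Your proof is correct and is essentially identical to the paper's: both pass from an eigenvalue $\lambda$ of $X_n$ to the eigenvalue $(\lambda - c_n)^2$ of $\tilde{X}_n^2$, apply Proposition \ref{prop:tilde_X_n^2} to get the $\Re$/$\Im$ constraints, and then invoke the alternative characterization of $H_n \cap R_n$ from Lemma \ref{lem:hyperbola_rectangle}. The only difference is presentational: you explicitly name the shift $c_n$ and go through the intermediate eigenvalue $\tilde{\lambda}$ of $\tilde{X}_n$, whereas the paper writes $(\lambda - c_n)^2$ directly.
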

\begin{proof}
	If $\lambda$ is an eigenvalue for $X_n = P_n + i Q_n$, then
	\begin{equation}
		\left( \lambda - \frac{\alpha_n + \alpha_n'}{2} - i \frac{\beta_n + \beta_n'}{2}  \right)^2  .
	\end{equation}
	is an eigenvalue for $\tilde{X_n}^2$. Hence, from Proposition \ref{prop:tilde_X_n^2}, 
	\begin{equation}
		\begin{aligned}
			\Re \left(  \left( \lambda - \frac{\alpha_n + \alpha_n'}{2} - i \frac{\beta_n + \beta_n'}{2}  \right)^2 \right)  &= \frac{\mathscr{A}_n^2 - \mathscr{B}_n^2}{4} \\
			\Abs{\Im \left(  \left( \lambda - \frac{\alpha_n + \alpha_n'}{2} - i \frac{\beta_n + \beta_n'}{2}  \right)^2 \right)} & \leq \frac{\Abs{\mathscr{A}_n \mathscr{B}_n}}{2} .
		\end{aligned}
	\end{equation}
	From Lemma \ref{lem:hyperbola_rectangle}, $\lambda \in H_n \cap R_n$.
\end{proof}

We introduce the following notation for the eigenspaces of a (random) matrix $Y_n \in M_n(\C)$: 

\begin{definition}
	\label{def:eigenspace}
	Let $Y_n \in M_n(\C)$. Define the following: 
	\begin{itemize}
		\item Let $E_\lambda(Y_n)$ be the $\mathbf{\lambda}$\textbf{-eigenspace} for $Y_n$, i.e. $E_\lambda(Y_n) = \ker(Y_n - \lambda I_n)$.
		\item Let $V_\lambda(Y_n)$ be the \textbf{generalized }$\mathbf{\lambda}$\textbf{-eigenspace} for $Y_n$, i.e. $E_\lambda(Y_n) = \bigcup_{k \geq 0} \ker( (Y_n - \lambda I_n)^k ) = \ker( (Y_n - \lambda I_n)^n )$.
	\end{itemize}
	In general, we allow for $E_\lambda(Y_n) = \{0\}$ or $V_\lambda(Y_n) = \{0\}$, but if we specifically say that $\lambda$ is an \textbf{eigenvalue}, then it is implied that $E_\lambda(Y_n) \neq \{0\}$ (and hence $V_\lambda(Y_n) \neq \{0\}$).	
\end{definition}

Next, we examine the relationship between the (generalized) eigenspaces of $X_n$ and $\tilde{X_n}^2$ to see that $X_n$ is almost diagonalizable:

\begin{proposition}	
	\label{prop:x_eigenspaces}
	Let $\rho \in \C$.
	If $\rho \neq 0$, let 
	\begin{equation}
		\begin{aligned}
			\lambda_{+}(\rho) & = \frac{\alpha_n + \alpha_n'}{2} + i \frac{\beta_n + \beta_n'}{2} + \sqrt{\rho} \\
			\lambda_{-}(\rho) & = \frac{\alpha_n + \alpha_n'}{2} + i \frac{\beta_n + \beta_n'}{2} - \sqrt{\rho} \,,
		\end{aligned}
	\end{equation}
	where $\sqrt{\rho}$ is a chosen square root of $\rho$. 
	Then, 
	\begin{equation}
		E_\rho(\tilde{X_n}^2) = 
		\begin{dcases}
			E_{\lambda_{+}(\rho)}(X_n) + E_{\lambda_{-}(\rho)}(X_n) & \rho \neq 0 \\
			V_{\frac{\alpha_n + \alpha_n'}{2} + i \frac{\beta_n + \beta_n'}{2}}(X_n) & \rho = 0 \,.
		\end{dcases}
	\end{equation}
	Further,
	\begin{equation}
		\begin{aligned}
			V_{\lambda_+(\rho)}(X_n) & = E_{\lambda_+(\rho)}(X_n) \\
			V_{\lambda_-(\rho)}(X_n) & = E_{\lambda_-(\rho)}(X_n) \\
			V_{\frac{\alpha_n + \alpha_n'}{2} + i \frac{\beta_n + \beta_n'}{2}}(X_n) & = \ker \left(  \left( X_n - \frac{\alpha_n + \alpha_n'}{2} - i \frac{\beta_n + \beta_n'}{2}\right)^2    \right)  \,.
		\end{aligned}
	\end{equation}	
\end{proposition}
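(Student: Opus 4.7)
The plan is to leverage the key observation from Proposition \ref{prop:tilde_X_n^2} that $\tilde{X_n}^2$ is normal, and hence diagonalizable, so its generalized eigenspaces coincide with its ordinary eigenspaces. Writing $c = \frac{\alpha_n + \alpha_n'}{2} + i \frac{\beta_n + \beta_n'}{2}$, the core algebraic identity is the factorization
\begin{equation*}
\tilde{X_n}^2 - \rho = (X_n - c)^2 - \rho = (X_n - \lambda_+(\rho))(X_n - \lambda_-(\rho)),
\end{equation*}
where the two factors commute. This will reduce everything to manipulating kernels of products of commuting operators.

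First, I would handle the case $\rho \neq 0$. The inclusion $E_{\lambda_+}(X_n) + E_{\lambda_-}(X_n) \subseteq E_\rho(\tilde{X_n}^2)$ follows from the factorization and commutativity. For the reverse inclusion, given $v \in \ker((X_n - \lambda_+)(X_n - \lambda_-))$, I would use the explicit partition-of-unity decomposition
\begin{equation*}
v = \frac{1}{\lambda_+ - \lambda_-}(X_n - \lambda_-)v - \frac{1}{\lambda_+ - \lambda_-}(X_n - \lambda_+)v,
\end{equation*}
valid because $\lambda_+ \neq \lambda_-$ when $\rho \neq 0$, and observe that each summand is annihilated by the appropriate remaining factor, placing them in $E_{\lambda_\pm}(X_n)$.

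Next, for the generalized-eigenspace identities when $\rho \neq 0$: since $((X_n - c)^2 - \rho)^k = (X_n - \lambda_+)^k(X_n - \lambda_-)^k$, any $v \in V_{\lambda_+}(X_n)$ lies in $V_\rho(\tilde{X_n}^2)$, which equals $E_\rho(\tilde{X_n}^2)$ by normality. Writing $v = v_+ + v_-$ via the decomposition just proved, one applies $(X_n - \lambda_+)^n$ to both sides and notes that $(X_n - \lambda_+)^n v_- = (\lambda_- - \lambda_+)^n v_-$; since $\lambda_+ \neq \lambda_-$, this forces $v_- = 0$, giving $V_{\lambda_+}(X_n) = E_{\lambda_+}(X_n)$, and symmetrically for $\lambda_-$.

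Finally, for $\rho = 0$, I would argue directly that $V_c(X_n) = \ker((X_n - c)^2)$ using normality of $\tilde{X_n}^2$: for any $k \geq 1$, $((X_n - c)^2)^k = (X_n - c)^{2k}$, so $V_0(\tilde{X_n}^2) = V_c(X_n)$; and again by normality $V_0(\tilde{X_n}^2) = E_0(\tilde{X_n}^2) = \ker((X_n - c)^2)$, yielding both the main identity for $\rho = 0$ and the claimed description of $V_c(X_n)$. I don't expect a serious obstacle here — the real content is already packaged in Proposition \ref{prop:tilde_X_n^2}; the mildest technical point is being careful that the partition-of-unity decomposition genuinely lands the pieces in the correct eigenspaces, which is immediate once the factors are rearranged.
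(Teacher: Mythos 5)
Your proposal is correct and complete. The underlying ingredients are the same as in the paper's proof --- the factorization $\tilde{X_n}^2 - \rho = (X_n - \lambda_+(\rho))(X_n - \lambda_-(\rho))$ and the normality of $\tilde{X_n}^2$ from Proposition~\ref{prop:tilde_X_n^2} --- but the execution differs. The paper restricts $X_n$ to each $E_\rho(\tilde{X_n}^2)$, observes that the restriction is annihilated by the separable polynomial $(x - c)^2 - \rho$, and then appeals to the global orthogonal decomposition $\bigoplus_\rho E_\rho(\tilde{X_n}^2)$ together with the pairwise distinctness of the roots $\lambda_\pm(\rho)$ across all $\rho$ to upgrade restricted eigenspaces to full eigenspaces and to obtain $V_{\lambda_\pm} = E_{\lambda_\pm}$. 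You instead work locally: the partition-of-unity identity
\begin{equation*}
v = \tfrac{1}{\lambda_+ - \lambda_-}(X_n - \lambda_-)v - \tfrac{1}{\lambda_+ - \lambda_-}(X_n - \lambda_+)v
\end{equation*}
directly exhibits the decomposition into the \emph{full} eigenspaces $E_{\lambda_\pm}(X_n)$, and the identity $V_{\lambda_+}(X_n) = E_{\lambda_+}(X_n)$ follows by pushing a generalized eigenvector into $V_\rho(\tilde{X_n}^2) = E_\rho(\tilde{X_n}^2)$ and hitting the resulting decomposition with $(X_n - \lambda_+)^n$. This avoids any reference to how the $\lambda_\pm(\rho)$ vary across different eigenvalues $\rho$ of $\tilde{X_n}^2$, which is a minor gain in self-containedness; the paper's version is shorter once one accepts the standard primary-decomposition facts it invokes. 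Your $\rho = 0$ argument, via $V_0(\tilde{X_n}^2) = V_c(X_n)$ and then normality, is likewise a clean equivalent of the paper's treatment.
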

\begin{proof}
	Since $\tilde{X_n}^2$ and $X_n$ commute, then $X_n$ fixes the $E_\rho(\tilde{X_n}^2)$. 
	
	Consider $\restr{X_n}{E_\rho(\tilde{X_n}^2)}: E_\rho(\tilde{X_n}^2) \to E_\rho(\tilde{X_n}^2) $. Then, $\restr{X_n}{E_\rho(\tilde{X_n}^2)}$ satisfies the polynomial
	\begin{equation}
		p(x) = \left( x - \frac{\alpha_n + \alpha_n'}{2} - i \frac{\beta_n + \beta_n'}{2} \right)^2 - \rho \,.
	\end{equation} 
	When $\rho \neq 0$, $p(x)$ is separable with roots $\lambda_{+}(\rho), \lambda_{-}(\rho)$. Hence, 
	\begin{equation}
		E_\rho(\tilde{X_n}^2) = E_{\lambda_{+}(\rho)} \left( \restr{X_n}{E_\rho(\tilde{X_n}^2)}\right)  + E_{\lambda_{-}(\rho)}\left( \restr{X_n}{E_\rho(\tilde{X_n}^2)}\right) \,.
	\end{equation}
	For $\rho = 0$, 
	\begin{equation}
		0 = p \left( \restr{X_n}{E_\rho(\tilde{X_n}^2)}\right) = \left( \restr{X_n}{E_\rho(\tilde{X_n}^2)} - \frac{\alpha_n + \alpha_n'}{2} - i \frac{\beta_n + \beta_n'}{2} \right)^2 \,.
	\end{equation}
	Hence, 
	\begin{equation}
		\begin{aligned}
			E_0(\tilde{X_n}^2) &= \ker \left(  \left( \restr{X_n}{E_0(\tilde{X_n}^2)} - \frac{\alpha_n + \alpha_n'}{2} - i \frac{\beta_n + \beta_n'}{2}\right)^2    \right) \\  
			& = V_{\frac{\alpha_n + \alpha_n'}{2} + i \frac{\beta_n + \beta_n'}{2}} \left( \restr{X_n}{E_0(\tilde{X_n}^2)}\right) \,.
		\end{aligned}
	\end{equation}
	Recall from Proposition \ref{prop:tilde_X_n^2} that $\tilde{X_n}^2$ is normal so that $\dsum_{\rho} E_\rho(\tilde{X_n}^2)$ is an orthogonal decomposition of the domain of $X_n$. For $\rho \neq 0$, the $\lambda_{+}(\rho), \lambda_{-}(\rho)$ are distinct and not equal to $\frac{\alpha_n + \alpha_n'}{2} + i \frac{\beta_n + \beta_n'}{2}$. Hence, 
	\begin{equation}
		\begin{aligned}
			E_{\lambda_{+ }(\rho)} \left( \restr{X_n}{E_\rho(\tilde{X_n}^2)}\right) & = E_{\lambda_{+ }(\rho)}(X_n) \\
			E_{\lambda_{- }(\rho)} \left( \restr{X_n}{E_\rho(\tilde{X_n}^2)}\right) & = E_{\lambda_{- }(\rho)}(X_n) \\
			V_{\frac{\alpha_n + \alpha_n'}{2} + i \frac{\beta_n + \beta_n'}{2}} \left( \restr{X_n}{E_0(\tilde{X_n}^2)}\right) &= V_{\frac{\alpha_n + \alpha_n'}{2} + i \frac{\beta_n + \beta_n'}{2}}(X_n)
		\end{aligned}
	\end{equation}
	and
	\begin{equation}
		\begin{aligned}
			V_{\lambda_+(\rho)}(X_n) & = E_{\lambda_+(\rho)}(X_n) \\
			V_{\lambda_-(\rho)}(X_n) & = E_{\lambda_-(\rho)}(X_n) \,.
		\end{aligned}
	\end{equation}
\end{proof}

Note that in the proof of Proposition \ref{prop:x_eigenspaces}, we could also start with the decomposition of the space into $V_{\rho}(X_n)$. Then, 
\begin{equation}
	\begin{aligned}
		V_{\rho}(X_n) & = \ker((X_n - \rho)^n) 
		\\ & \subset \ker\left(  \left(  \tilde{X_n}^2 -  \left(\rho - \frac{\alpha_n + \alpha_n'}{2} - i \frac{\beta_n + \beta_n'}{2} \right)^2  \right)^n    \right) 
		\\ & = \ker\left(  \tilde{X_n}^2 -  \left(\rho - \frac{\alpha_n + \alpha_n'}{2} - i \frac{\beta_n + \beta_n'}{2} \right)^2    \right) \,.
	\end{aligned}
\end{equation}
For $\rho' = (\alpha_n + \alpha_n') + i (\beta_n + \beta_n') - \rho$, $V_{\rho'}(X_n)$ is contained in the same eigenspace of $\tilde{X_n}^2$. For $\rho \neq \frac{\alpha_n + \alpha_n'}{2} + i \frac{\beta_n + \beta_n'}{2}$, we can examine the restriction of $\tilde{X_n}^2$ on $V_\rho(X_n) + V_{\rho'}(X_n)$ to see that $V_\rho(X_n) = E_\rho(X_n)$ and $V_{\rho'}(X_n) = E_{\rho'}(X_n)$.

Since $\tilde{X_n}^2$ is normal (Proposition \ref{prop:tilde_X_n^2}), then $\dsum_{\rho} E_\rho(\tilde{X_n}^2)$ is an orthogonal decomposition of the domain of $z - X_n$. Thus, in order to bound $\sigma_{\min}(z - X_n)$ from below, it suffices to bound $\sigma_{\min}\left( \restr{(z - X_n)}{E_\rho (\tilde{X_n}^2)}\right) $ from below. No meaningful bound can be given when $z$ is an eigenvalue of $X_n$, as this is exactly when $\sigma_{\min}(z - X_n) = 0$. Hence, it suffices to consider when $z$ is not an eigenvalue of $X_n$. Then, the following Proposition bounds $\sigma_{\min}\left( \restr{(z - X_n)}{E_\rho (\tilde{X_n}^2)}\right) $ from below when $z$ is not an eigenvalue of $X_n$. The key idea is to consider $z - X_n$ on invariant subspaces with small dimension.

\begin{proposition}
	\label{prop:singular_value_estimate}
	Let $z \in \C$. Consider $\rho \in \C$ that is an eigenvalue of $\tilde{X_n}^2$. For $\rho \neq 0$, let 
	\begin{equation}
		\begin{aligned}
			\lambda_{+}(\rho) & = \frac{\alpha_n + \alpha_n'}{2} + i \frac{\beta_n + \beta_n'}{2} + \sqrt{\rho} \\
			\lambda_{-}(\rho) & = \frac{\alpha_n + \alpha_n'}{2} + i \frac{\beta_n + \beta_n'}{2} - \sqrt{\rho} \,,
		\end{aligned}
	\end{equation}
	where $\sqrt{\rho}$ is a chosen square root of $\rho$. 
	
	Suppose that $z$ is not an eigenvalue of  $\restr{X_n}{E_\rho (\tilde{X_n}^2)}$. Then, the minimum singular value of $\restr{(z - X_n)}{E_\rho (\tilde{X_n}^2)}$ is bounded from below by the following:
	\begin{equation}
		\sigma_{\min} \left( \restr{(z - X_n)}{E_\rho (\tilde{X_n}^2)}  \right) \geq 
		\begin{dcases}
			\frac{ \min\left( \Abs{z - \lambda_{+}(\rho)}, \Abs{z - \lambda_{-}(\rho)}    \right)^2    }{\norm{\restr{(z - X_n)}{E_\rho (\tilde{X_n}^2)}} } & \rho \neq 0 \\
			\frac{ \Abs{z - \frac{\alpha_n + \alpha_n'}{2} - i \frac{\beta_n + \beta_n'}{2}}^2      }{ \norm{\restr{(z - X_n)}{E_0 (\tilde{X_n}^2)}}  } & \rho = 0 \,.
		\end{dcases}
	\end{equation}
\end{proposition}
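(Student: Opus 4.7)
The plan is to reduce the analysis on the potentially high-dimensional subspace $E_\rho(\tilde{X_n}^2)$ to two-dimensional $(z-X_n)$-invariant subspaces, on which the minimum singular value is controlled by the identity $\sigma_{\min}(M) = \Abs{\det M}/\norm{M}$ valid for any invertible $2 \times 2$ matrix $M$. Write $B = \restr{(z - X_n)}{E_\rho(\tilde{X_n}^2)}$, which is invertible by hypothesis. By Proposition \ref{prop:x_eigenspaces}, $\restr{X_n}{E_\rho(\tilde{X_n}^2)}$ is annihilated by the degree-two polynomial $(x - \lambda_+(\rho))(x - \lambda_-(\rho))$ when $\rho \neq 0$, and by $(x - c)^2$ when $\rho = 0$, where $c = \frac{\alpha_n + \alpha_n'}{2} + i\frac{\beta_n + \beta_n'}{2}$. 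Substituting $X_n = z - B$, the operator $B$ therefore satisfies a degree-two polynomial whose roots are $c_\pm := z - \lambda_\pm(\rho)$ (for $\rho \neq 0$) or the double root $z - c$ (for $\rho = 0$).

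Given any non-zero $u \in E_\rho(\tilde{X_n}^2)$, consider $W_u = \Span(u, Bu)$. The degree-two identity applied to $u$ expresses $B^2 u$ as a linear combination of $u$ and $Bu$, so $W_u$ is $B$-invariant of dimension at most $2$. When $\dim W_u = 2$, Schur's theorem supplies an orthonormal basis of $W_u$ in which $\restr{B}{W_u}$ is upper triangular with its eigenvalues on the diagonal, namely $c_+, c_-$ (or $z - c, z - c$). Hence $\Abs{\det(\restr{B}{W_u})} = \Abs{c_+ c_-}$ (or $\Abs{z - c}^2$), and combining the $2 \times 2$ singular value identity with the obvious bound $\norm{\restr{B}{W_u}} \leq \norm{B}$ yields
\[ \sigma_{\min}(\restr{B}{W_u}) \geq \frac{\Abs{c_+ c_-}}{\norm{B}} \geq \frac{\min(\Abs{c_+}, \Abs{c_-})^2}{\norm{B}} \, . \]
Since $u \in W_u$, we have $\norm{Bu}/\norm{u} \geq \sigma_{\min}(\restr{B}{W_u})$; taking the infimum over non-zero $u$ yields the stated bound. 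The edge case $\dim W_u = 1$ means $u$ is an eigenvector of $B$ with eigenvalue $\lambda \in \{c_+, c_-\}$ (or $\lambda = z - c$), and then $\norm{Bu}/\norm{u} = \Abs{\lambda} \geq \min(\Abs{c_+}, \Abs{c_-})^2/\norm{B}$ follows from $\min(\Abs{c_+}, \Abs{c_-}) \leq \Abs{\lambda}$ and $\Abs{\lambda} \leq \norm{B}$.

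The principal insight is that $B$ has a degree-two minimal polynomial on $E_\rho(\tilde{X_n}^2)$, which enables the reduction to $2 \times 2$ cyclic invariant subspaces $W_u$. A naive application of the general bound $\sigma_{\min}(A) \geq \Abs{\det A}/\norm{A}^{d - 1}$ directly on $E_\rho(\tilde{X_n}^2)$ (of dimension $d$) would instead produce a much weaker estimate with $\norm{B}^{d - 1}$ in the denominator; obtaining the denominator $\norm{B}^1$ is precisely what matters for the singular value integrability argument in the next section. The rest is routine linear algebra once the degree-two structure supplied by Proposition \ref{prop:x_eigenspaces} is in hand.
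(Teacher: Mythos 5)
Your proof is correct and is in essence the same argument as the paper's: reduce to two-dimensional $(z-X_n)$-invariant subspaces and exploit $\sigma_{\min} = \Abs{\det}/\norm{\cdot}$ in dimension $\leq 2$. In fact, your cyclic subspace $W_u = \Span(u, Bu)$ coincides with the subspace the paper uses. For $\rho \neq 0$ the paper writes $\zeta = v + w$ with $v \in E_{\lambda_+}(X_n)$, $w \in E_{\lambda_-}(X_n)$ and takes $U = \Span(v, w)$; since $\zeta$ and $B\zeta = c_+ v + c_- w$ span $U$ whenever $c_+ \neq c_-$ (which holds as $\rho \neq 0$), one has $W_\zeta = U$. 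For $\rho = 0$ the paper takes $\Span(\zeta, \tilde{X_n}\zeta)$, and since $\tilde{X_n}\zeta = (z - c)\zeta - B\zeta$, this is again $\Span(\zeta, B\zeta) = W_\zeta$. So the two proofs differ only in packaging: you phrase the reduction uniformly via the degree-two minimal polynomial and cyclic subspaces, while the paper splits into the eigenspace decomposition and handles $\rho = 0$ via generalized eigenvectors. Your version avoids the case split, which is a mild aesthetic gain.

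One step deserves a word more of justification. When $\dim W_u = 2$ you assert that the eigenvalues of $\restr{B}{W_u}$ are precisely $c_+, c_-$ (resp.\ the double root $z - c$), invoking Schur. Schur only tells you the diagonal entries are \emph{some} eigenvalues; to know they are $c_+, c_-$ rather than, say, $c_+, c_+$, note that $\dim W_u = 2$ forces $u$ to be a cyclic vector for $\restr{B}{W_u}$, so its minimal polynomial equals its characteristic polynomial; the minimal polynomial has degree $2$ and divides $(x - c_+)(x - c_-)$, hence equals it. (The argument would actually still go through with the weaker conclusion, since $\Abs{c_\pm}^2 \geq \min(\Abs{c_+}, \Abs{c_-})^2$, but the clean statement you make needs the cyclic-vector observation.) With that remark filled in, the proof is complete.
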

\begin{proof}
	Recall that 
	\begin{equation}
		\sigma_{\min} \left( \restr{(z - X_n)}{E_\rho (\tilde{X_n}^2)}  \right) = \min_{\substack{\zeta \in E_\rho (\tilde{X_n}^2)  \\ \zeta \neq 0}} \frac{ \norm{(z - X_n)(\zeta)}  }{\norm{\zeta}} \,.
	\end{equation}
	First, consider if $\rho \neq 0$. From Proposition \ref{prop:x_eigenspaces}, $E_\rho(\tilde{X_n}^2) =  E_{\lambda_{+}(\rho)}(X_n) + E_{\lambda_{-}(\rho)}(X_n)$. Let $V = E_{\lambda_{+}(\rho)}(X_n)$, $W = E_{\lambda_{-}(\rho)}(X_n)$. Then,
	\begin{equation}
		\min_{\substack{\zeta \in E_\rho (\tilde{X_n}^2)  \\ \zeta \neq 0}} \frac{ \norm{(z - X_n)(\zeta)}  }{\norm{\zeta}} = \min_{\substack{v \in V, w \in  W \\ (v, w) \neq (0, 0)}  } \frac{ \norm{(z - X_n)(v + w)}  }{\norm{v + w}} \,.
	\end{equation}
	Fix some $v \in V, w \in  W$. Let $U = \text{span}(v, w)$. Then, $z - X_n$ fixes $U$. We proceed to show that 
	\begin{equation}
		\label{eqn:prop:singular_value_estimate:2}
		\sigma_{\min} \left(  \restr{(z - X_n )}{U} \right) \geq \frac{ \min\left( \Abs{z - \lambda_{+}(\rho)}, \Abs{z - \lambda_{-}(\rho)}    \right)^2    }{\norm{\restr{(z - X_n )}{U}}} \,.
	\end{equation}
	Note that $\norm{\restr{(z - X_n )}{U}} \neq 0$ since $z$ is not an eigenvalue of $\restr{(z - X_n )}{U}$. If either $v = 0$ or $w = 0$, then $\restr{(z - X_n )}{U} = z - \lambda_{+}(\rho)$ or $\restr{(z - X_n )}{U} = z - \lambda_{-}(\rho)$. Without loss of generality assume that $\restr{(z - X_n )}{U} = z - \lambda_{+}(\rho)$. Then, the following verifies (\ref{eqn:prop:singular_value_estimate:2}):
	\begin{equation}
		\begin{aligned}
			\sigma_{\min} \left(  \restr{(z - X_n )}{U} \right) & = \Abs{z - \lambda_{+}(\rho) } \\
			& = \frac{ \Abs{z - \lambda_{+}(\rho)}^2    }{ \norm{\restr{(z - X_n )}{U}}} \\
			& \geq \frac{ \min\left( \Abs{z - \lambda_{+}(\rho)}, \Abs{z - \lambda_{-}(\rho)}    \right)^2    }{\norm{\restr{(z - X_n )}{U}}} \,.
		\end{aligned}
	\end{equation}
	If $v \neq 0$ and $w \neq 0$ then $\dim(U) = 2$ and the eigenvalues of $\restr{(z - X_n )}{U}$ are $z - \lambda_{+}(\rho), z - \lambda_{-}(\rho)$. Since the absolute value of the product of the eigenvalues of $\restr{(z - X_n )}{U}$ is equal to the product of the singular values of $\restr{(z - X_n )}{U}$, then 
	\begin{equation}
		\begin{aligned}
			\Abs{z - \lambda_{+}(\rho)} \Abs{z - \lambda_{-}(\rho)} & = \sigma_1 \left( \restr{(z - X_n )}{U} \right)  \sigma_2 \left(  \restr{(z - X_n )}{U} \right)  \\
			& = \norm{\restr{(z - X_n )}{U}} \sigma_{\min} \left(  \restr{(z - X_n )}{U} \right) \,.
		\end{aligned}
	\end{equation}
	Hence, 
	\begin{equation}
		\begin{aligned}
			\sigma_{\min} \left(  \restr{(z - X_n )}{U} \right) & = \frac{\Abs{z - \lambda_{+}(\rho)} \Abs{z - \lambda_{-}(\rho)} }{\norm{\restr{(z - X_n )}{U}} } \\
			& \geq \frac{ \min\left( \Abs{z - \lambda_{+}(\rho)}, \Abs{z - \lambda_{-}(\rho)}    \right)^2    }{\norm{\restr{(z - X_n )}{U}}} \,.
		\end{aligned}
	\end{equation}
	Thus, in all cases of $\dim(U)$, (\ref{eqn:prop:singular_value_estimate:2}) holds. The following inequalities complete the proof in the case where $\rho \neq 0$: 
	\begin{equation}
		\begin{aligned}
			\min_{\substack{v \in V, w \in  W \\ (v, w) \neq (0, 0)}  } \frac{ \norm{(z - X_n)(v + w)}  }{\norm{v + w}} 
			& \geq \min_{\substack{v \in V, w \in  W \\ (v, w) \neq (0, 0)}  } \min_{ \substack{\zeta \in U \\ \zeta \neq 0}  }   \frac{ \norm{(z - X_n)(\zeta)}  }{\norm{\zeta}}
			\\ &= \min_{\substack{v \in V, w \in  W \\ (v, w) \neq (0, 0)}  } \sigma_{\min} \left(  \restr{(z - X_n )}{U} \right)
			\\ &\geq \min_{\substack{v \in V, w \in  W \\ (v, w) \neq (0, 0)}  } \frac{ \min\left( \Abs{z - \lambda_{+}(\rho)}, \Abs{z - \lambda_{-}(\rho)}    \right)^2    }{\norm{\restr{(z - X_n )}{U}}}
			\\ & \geq \frac{ \min\left( \Abs{z - \lambda_{+}(\rho)}, \Abs{z - \lambda_{-}(\rho)}    \right)^2    }{ \norm{\restr{(z - X_n)}{E_\rho (\tilde{X_n}^2)}}} \,.
		\end{aligned}
	\end{equation}
	Next, consider if $\rho = 0$. For $\zeta \in E_0(\tilde{X_n}^2)$, let $V = \text{span}(\zeta, \tilde{X_n} \zeta)$. Since $E_0(\tilde{X_n}^2) = \ker(\tilde{X_n}^2)$, then $\tilde{X_n}$ fixes $V$, so $z - X_n$ also fixes $V$. We proceed to show that: 
	\begin{equation}
		\label{eqn:prop:singular_value_estimate:3}
		\sigma_{\min} \left(  \restr{(z - X_n )}{V} \right) = \frac{ \Abs{z - \frac{\alpha_n + \alpha_n'}{2} - i \frac{\beta_n + \beta_n'}{2}}^2      }{\norm{\restr{(z - X_n )}{V}}  } \,.
	\end{equation}
	Note that $\norm{\restr{(z - X_n )}{V}} \neq 0$ since $z$ is not an eigenvalue of $\restr{(z - X_n )}{V}$. If $\tilde{X_n} \zeta = 0$, then $\restr{(z - X_n )}{V} = z - \frac{\alpha_n + \alpha_n'}{2} - i \frac{\beta_n + \beta_n'}{2}$. In this case, the equality clearly holds.
	
	If $\tilde{X_n} \zeta \neq 0$, then $\dim(V) = 2$. The only eigenvalue of $\restr{(z - X_n )}{V}$ is $z - \frac{\alpha_n + \alpha_n'}{2} - i \frac{\beta_n + \beta_n'}{2}$. Since the absolute value of the product of the eigenvalues of $\restr{(z - X_n )}{V}$ is equal to the product of the singular values of $\restr{(z - X_n )}{V}$, then 
	\begin{equation}
		\begin{aligned}
			\Abs{z - \frac{\alpha_n + \alpha_n'}{2} - i \frac{\beta_n + \beta_n'}{2}}^2 
			& = \sigma_1(\restr{(z - X_n )}{V}) \sigma_2(\restr{(z - X_n )}{V}) \\
			& = \norm{\restr{(z - X_n )}{V}} \sigma_{\min} \left(  \restr{(z - X_n )}{V} \right) \,.
		\end{aligned}
	\end{equation}
	Rearranging this produces (\ref{eqn:prop:singular_value_estimate:3}). Hence, in all cases of $\dim(V)$, (\ref{eqn:prop:singular_value_estimate:3}) holds. The following inequalities complete the proof in the case where $\rho = 0$: 
	\begin{equation}
		\begin{aligned}
			\min_{\substack{\zeta \in E_\rho (\tilde{X_n}^2)  \\ \zeta \neq 0}} \frac{ \norm{(z - X_n)(\zeta)}  }{\norm{\zeta}} 
			& \geq \min_{\substack{\zeta \in E_\rho (\tilde{X_n}^2)  \\ \zeta \neq 0}  } \min_{\substack{v \in V \\ v \neq 0}}   \frac{ \norm{(z - X_n)(v)}  }{\norm{v}} \\
			& =  \min_{ \substack{\zeta \in E_\rho (\tilde{X_n}^2)  \\ \zeta \neq 0}  } \sigma_{\min} (\restr{(z - X_n )}{V} ) \\
			& = \min_{ \substack{\zeta \in E_\rho (\tilde{X_n}^2)  \\ \zeta \neq 0}  } \frac{ \Abs{z - \frac{\alpha_n + \alpha_n'}{2} - i \frac{\beta_n + \beta_n'}{2}}^2      }{\norm{\restr{(z - X_n )}{V}}  } \\
			& \geq \frac{ \Abs{z - \frac{\alpha_n + \alpha_n'}{2} - i \frac{\beta_n + \beta_n'}{2}}^2      }{ \norm{\restr{(z - X_n)}{E_0 (\tilde{X_n}^2)}} } \,.
		\end{aligned}
	\end{equation}
	Both of the inequalities in the above chain are actually equalities, but we will not need this fact.
\end{proof}

We conclude by unifying the cases $\rho \neq 0$ and $\rho = 0$ in Proposition \ref{prop:singular_value_estimate} and presenting a bound on the entire domain of $z - X_n$ for all $z \in \C$: 

\begin{theorem}
	\label{thm:singular_value_estimate}
	Let $z \in \C$. The minimum singular value of $z - X_n$ satisfies the following inequality: 
	\begin{equation}
		\sigma_{\min}(z - X_n) \geq \frac{\dist(z, H_n \cap R_n)^2}{ \norm{z - X_n}  } \,.
	\end{equation}
\end{theorem}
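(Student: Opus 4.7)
The plan is to combine the orthogonal decomposition of $\C^n$ into eigenspaces of $\tilde{X_n}^2$ with the per-eigenspace bound of Proposition \ref{prop:singular_value_estimate} and the geometric fact (Proposition \ref{prop:x_eigenvalue}) that every eigenvalue of $X_n$ lies on $H_n \cap R_n$.

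First I would dispose of the trivial case that $z$ is an eigenvalue of $X_n$: Proposition \ref{prop:x_eigenvalue} then gives $z \in H_n \cap R_n$, so both $\dist(z, H_n \cap R_n) = 0$ and $\sigma_{\min}(z - X_n) = 0$, and the inequality is trivial. Henceforth assume $z$ is not an eigenvalue of $X_n$. Since $\tilde{X_n}^2$ is normal (Proposition \ref{prop:tilde_X_n^2}), its eigenspaces $\{E_\rho(\tilde{X_n}^2)\}_\rho$ form an orthogonal decomposition of $\C^n$; because $\tilde{X_n}^2$ is a polynomial in $X_n$, each summand is invariant under $z - X_n$. Therefore
\begin{equation*}
\sigma_{\min}(z - X_n) = \min_\rho \sigma_{\min}\!\left(\restr{(z - X_n)}{E_\rho(\tilde{X_n}^2)}\right),
\end{equation*}
and it suffices to bound each summand below by $\dist(z, H_n \cap R_n)^2 / \norm{z - X_n}$.

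Fix an eigenvalue $\rho$ of $\tilde{X_n}^2$ and apply Proposition \ref{prop:singular_value_estimate}. The crucial input is that the points $\lambda_\pm(\rho)$ (for $\rho \neq 0$) or $c := \frac{\alpha_n + \alpha_n'}{2} + i\frac{\beta_n + \beta_n'}{2}$ (for $\rho = 0$) appearing in the numerator are the only possible eigenvalues of $\restr{X_n}{E_\rho(\tilde{X_n}^2)}$, and whenever such a point is genuinely an eigenvalue of $X_n$ it lies on $H_n \cap R_n$ by Proposition \ref{prop:x_eigenvalue}. In the generic case where both $\lambda_\pm(\rho)$ are eigenvalues of $X_n$ (respectively, when $\rho = 0$ is an eigenvalue of $\tilde{X_n}^2$, in which case $c$ is automatically an eigenvalue of $X_n$), both relevant points lie on $H_n \cap R_n$, so $\min(\Abs{z - \lambda_+(\rho)}, \Abs{z - \lambda_-(\rho)}) \geq \dist(z, H_n \cap R_n)$. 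Combining with the trivial bound $\norm{\restr{(z - X_n)}{E_\rho(\tilde{X_n}^2)}} \leq \norm{z - X_n}$ then yields the desired estimate for this eigenspace.

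The main (modest) obstacle is the degenerate case in which, say, $\lambda_+(\rho)$ is an eigenvalue of $X_n$ but $\lambda_-(\rho)$ is not: then $\lambda_-(\rho)$ need not lie on $H_n \cap R_n$, and Proposition \ref{prop:singular_value_estimate}'s minimum can in principle be smaller than $\dist(z, H_n \cap R_n)$. I handle this by invoking Proposition \ref{prop:x_eigenspaces}, which forces $E_\rho(\tilde{X_n}^2) = E_{\lambda_+(\rho)}(X_n)$; hence $\restr{X_n}{E_\rho(\tilde{X_n}^2)} = \lambda_+(\rho)\,I$, and $\sigma_{\min}(\restr{(z - X_n)}{E_\rho(\tilde{X_n}^2)}) = \Abs{z - \lambda_+(\rho)}$. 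Since $\lambda_+(\rho) \in H_n \cap R_n$, both $\Abs{z - \lambda_+(\rho)} \geq \dist(z, H_n \cap R_n)$ and $\norm{z - X_n} \geq \Abs{z - \lambda_+(\rho)} \geq \dist(z, H_n \cap R_n)$, so $\Abs{z - \lambda_+(\rho)} \cdot \norm{z - X_n} \geq \dist(z, H_n \cap R_n)^2$, which gives the bound. Reassembling across all $\rho$ then completes the proof.
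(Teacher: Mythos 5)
Your proof is correct and follows the same overall strategy as the paper: reduce $\sigma_{\min}(z - X_n)$ to the per-eigenspace bounds of Proposition \ref{prop:singular_value_estimate} via the orthogonal decomposition $\C^n = \bigoplus_\rho E_\rho(\tilde{X_n}^2)$, then use Proposition \ref{prop:x_eigenvalue} to replace the eigenvalue-distances in the numerators by $\dist(z, H_n \cap R_n)$ and the trivial operator-norm bound in the denominators. The one place you diverge is the ``degenerate'' case where exactly one of $\lambda_\pm(\rho)$ is an eigenvalue of $X_n$; your workaround via $E_\rho(\tilde{X_n}^2) = E_{\lambda_+(\rho)}(X_n)$ and the spectral-radius bound $\Abs{z - \lambda_+(\rho)} \leq \norm{z - X_n}$ is valid. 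However, this case is not genuinely degenerate, and the paper's uniform inequality $\dist(z, H_n \cap R_n) \leq \min(\Abs{z - \lambda_+(\rho)}, \Abs{z - \lambda_-(\rho)})$ is correct as stated: $\lambda_+(\rho)$ and $\lambda_-(\rho)$ are reflections of each other through the center $c = \frac{\alpha_n + \alpha_n'}{2} + i\frac{\beta_n + \beta_n'}{2}$, and both $H_n$ and $R_n$ are symmetric under $\lambda \mapsto 2c - \lambda$, so if one of $\lambda_\pm(\rho)$ lies on $H_n \cap R_n$ then so does the other. Equivalently, both satisfy $(\lambda_\pm(\rho) - c)^2 = \rho$, and Lemma \ref{lem:hyperbola_rectangle} characterizes membership in $H_n \cap R_n$ purely through $(\lambda - c)^2$, with Proposition \ref{prop:tilde_X_n^2} guaranteeing $\rho$ satisfies those constraints whenever $\rho$ is an eigenvalue of $\tilde{X_n}^2$. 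Noting this symmetry would have spared you the extra case analysis, but what you wrote is sound.
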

\begin{proof}
	If $z$ is an eigenvalue of $X_n$, then the left-hand side of the inequality is $0$ and from Proposition \ref{prop:x_eigenvalue} the right-hand side of the inequality is also $0$. Thus, we may assume that $z$ is not an eigenvalue of $X_n$.
	
	Consider $\rho \in \C$ that is an eigenvalue for $\tilde{X_n}^2$. Since $z$ is not an eigenvalue for $X_n$, then $z$ is not an eigenvalue of $\restr{X_n}{E_\rho (\tilde{X_n}^2)}$. Hence, from Proposition \ref{prop:singular_value_estimate}, 
	\begin{equation}
		\sigma_{\min} \left( \restr{(z - X_n)}{E_\rho (\tilde{X_n}^2)}  \right) \geq 
		\begin{dcases}
			\frac{ \min\left( \Abs{z - \lambda_{+}(\rho)}, \Abs{z - \lambda_{-}(\rho)}    \right)^2    }{\norm{\restr{(z - X_n)}{E_\rho (\tilde{X_n}^2)}} } & \rho \neq 0 \\
			\frac{ \Abs{z - \frac{\alpha_n + \alpha_n'}{2} - i \frac{\beta_n + \beta_n'}{2}}^2      }{ \norm{\restr{(z - X_n)}{E_0 (\tilde{X_n}^2)}}  } & \rho = 0 \,.
		\end{dcases}
	\end{equation}
	Now, we proceed to make these estimates independent of $\rho$.
	
	If $\rho \neq 0$, recall from Proposition \ref{prop:x_eigenspaces} that $E_\rho(\tilde{X_n}^2) = E_{\lambda_{+}(\rho)}(X_n) + E_{\lambda_{-}(\rho)}(X_n)$. Since $\rho$ is an eigenvalue of $\tilde{X_n}^2$ then at least one of $\lambda_{+}(\rho), \lambda_{-}(\rho)$ is an eigenvalue for $\restr{X_n}{E_\rho(\tilde{X_n}^2)}$. From Proposition \ref{prop:x_eigenvalue}, any eigenvalue of $X_n$ is on $H_n \cap R_n$. Hence,
	\begin{equation}
		\begin{aligned}
			\dist(z, H_n \cap R_n) & \leq \min\left( \Abs{z - \lambda_{+}(\rho)}, \Abs{z - \lambda_{-}(\rho)} \right) \\
			& \leq \norm{\restr{(z - X_n)}{E_\rho (\tilde{X_n}^2)}} \\
			& \leq \norm{z - X_n} \,.
		\end{aligned}
	\end{equation} 
	Since $z$ is not an eigenvalue of $X_n$, then $\norm{z - X_n} \neq 0$ and thus the following inequalities hold:
	\begin{equation}
		\frac{ \min\left( \Abs{z - \lambda_{+}(\rho)}, \Abs{z - \lambda_{-}(\rho)}    \right)^2    }{\norm{\restr{(z - X_n)}{E_\rho (\tilde{X_n}^2)}} } 
		\geq \frac{ \dist(z, H_n \cap R_n) ^2    }{\norm{z - X_n }} \,.
	\end{equation}
	If $\rho = 0$, recall from Proposition \ref{prop:x_eigenspaces} that $E_0(\tilde{X_n}^2) = V_{\frac{\alpha_n + \alpha_n'}{2} + i \frac{\beta_n + \beta_n'}{2}}(X_n)$ so that $\frac{\alpha_n + \alpha_n'}{2} + i \frac{\beta_n + \beta_n'}{2}$ is an eigenvalue for $\restr{X_n}{E_0(\tilde{X_n}^2)}$. From Proposition \ref{prop:x_eigenvalue}, any eigenvalue of $X_n$ is on $H_n \cap R_n$. Hence,  
	\begin{equation}
		\begin{aligned}
			\dist(z, H_n \cap R_n) & \leq \Abs{z - \frac{\alpha_n + \alpha_n'}{2} - i \frac{\beta_n + \beta_n'}{2}} \\
			& \leq \norm{  \restr{(z - X_n)}{E_0(\tilde{X_n}^2)} } \\
			& \leq \norm{z - X_n} \,.
		\end{aligned}
	\end{equation}
	Since $z$ is not an eigenvalue of $X_n$, then $\norm{z - X_n} \neq 0$ and thus the following inequalities hold:
	\begin{equation}
		\frac{ \Abs{z - \frac{\alpha_n + \alpha_n'}{2} - i \frac{\beta_n + \beta_n'}{2}}^2      }{ \norm{\restr{(z - X_n)}{E_\rho (\tilde{X_n}^2)}}  } \geq \frac{\dist(z, H_n \cap R_n)^2}{\norm{z - X_n}} \,.
	\end{equation}
	We conclude that for any $\rho$ that is an eigenvalue for $\tilde{X_n}^2$, 
	\begin{equation}
		\sigma_{\min} \left( \restr{(z - X_n)}{E_\rho (\tilde{X_n}^2)}  \right) \geq \frac{\dist(z, H_n \cap R_n)^2}{\norm{z - X_n}} \,.
	\end{equation}
	Since $\tilde{X_n}^2$ is normal (Proposition \ref{prop:tilde_X_n^2}), then $\dsum_{\rho} E_\rho(\tilde{X_n}^2)$ is an orthogonal decomposition of the domain of $z - X_n$. Hence, 
	\begin{equation}
		\begin{aligned}
			\sigma_{\min}(z - X_n) & = \min_{\{\rho : E_\rho(\tilde{X_n}^2) \neq \{0\}\}} \sigma_{\min} \left( \restr{(z - X_n)}{E_\rho (\tilde{X_n}^2)}  \right) \\
			& \geq \min_{\{\rho : E_\rho(\tilde{X_n}^2) \neq \{0\}\}} \frac{\dist(z, H_n \cap R_n)^2}{\norm{z - X_n}} \\
			& = \frac{\dist(z, H_n \cap R_n)^2}{\norm{z - X_n}}			 \,.
		\end{aligned}
	\end{equation}
\end{proof}

\section{Proofs of Convergence and Converse}
\label{sec:convergence_converse}

In this section, we complete the proof of Theorem \ref{thm:convergence_brown_measure} and then deduce Theorem \ref{thm:converse}. 

\begin{theorem}
	\label{thm:convergence_brown_measure}
	Consider the random matrix model $X_n = P_n + i Q_n$, where $P_n, Q_n \in M_n(\C)$ are independently Haar-rotated Hermitian matrices with distributions with at most $2$ atoms. Suppose that the law of $P_n$ converges to the law of $p$ and the law of $Q_n$ converges to the law of $q$, where $p, q \in (M, \tau)$ are freely independent. Then, the empirical spectral distribution of $X_n$ converges almost surely in the vague topology to the Brown measure of $X = p + i q$.  
\end{theorem}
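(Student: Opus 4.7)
The plan is to apply Proposition \ref{prop:log_potential_convergence} to $\mu_n$, the empirical spectral distribution of $X_n$, and $\mu$, the Brown measure of $X$. This reduces the theorem to showing that for Lebesgue almost every $z \in \C$, the logarithmic potentials satisfy $L_{\mu_n}(z) \to L_\mu(z)$ almost surely, or equivalently
\[
\tfrac{1}{2}\int_0^\infty \log x \, d\nu_{n,z}(x) \longrightarrow \tfrac{1}{2}\int_0^\infty \log x \, d\nu_z(x) \quad \text{a.s.}
\]
The compact-support hypothesis of Proposition \ref{prop:log_potential_convergence} is immediate: because $\norm{X_n} \leq \norm{P_n'} + \norm{Q_n'} = \max(\Abs{\alpha_n}, \Abs{\alpha_n'}) + \max(\Abs{\beta_n}, \Abs{\beta_n'})$ converges, both $\mu_n$ and $\mu$ are deterministically supported in a fixed compact subset of $\C$.

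I would then treat first the main case where $p$ and $q$ each have exactly two atoms. Fix any $z \in \C \setminus (H \cap R)$; this excludes only a set of 2-dimensional Lebesgue measure zero since $H \cap R$ is a one-dimensional subset of a hyperbola. At such a $z$, two ingredients combine. First, Corollary \ref{cor:nu_z_convergence} gives $\nu_{n,z} \to \nu_z$ almost surely in the vague topology. Second, the parameter convergence $\alpha_n \to \alpha$, $\alpha_n' \to \alpha'$, $\beta_n \to \beta$, $\beta_n' \to \beta'$ implies $\dist(z, H_n \cap R_n) \to \dist(z, H \cap R) > 0$, while $\norm{z - X_n} \leq \Abs{z} + \norm{X_n}$ is deterministically bounded; Theorem \ref{thm:singular_value_estimate} then yields a \emph{deterministic} $\delta_z > 0$ with $\sigma_{\min}(z - X_n) \geq \delta_z$ for all sufficiently large $n$. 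Hence, for such $n$, $\nu_{n,z}$ is supported in a fixed compact interval $[\delta_z^2, M_z] \subset (0, \infty)$, on which $\log$ is bounded and continuous. Vague convergence on this compact set then yields the desired convergence of the logarithmic integrals almost surely.

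For the degenerate cases where $p$ or $q$ is a constant, the operator $X = p + iq$ is normal and its Brown measure is simply the (translated) spectral measure. I would handle these separately by approximating the degenerate model by two-atom models with a vanishing atom weight and invoking the continuous dependence of the Brown measure on $(\mu_p, \mu_q)$ afforded by Corollary \ref{cor:brown_measure_injective}, combined with a diagonal argument across $n$.

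The principal obstacle is the $\log$ singularity at $0$: vague convergence $\nu_{n,z} \to \nu_z$ is by itself insufficient to pass to the limit in $\int \log x \, d\nu_{n,z}$. All the difficulty is shifted onto Theorem \ref{thm:singular_value_estimate}, whose deterministic, quadratic lower bound for $\sigma_{\min}(z - X_n)$ in terms of $\dist(z, H_n \cap R_n)$ is strong enough to push the support of $\nu_{n,z}$ uniformly away from $0$ for each $z \notin H \cap R$. This is what closes the vague-convergence argument without needing any probabilistic small-ball estimate on the minimum singular value of the kind ordinarily required in Hermitization proofs.
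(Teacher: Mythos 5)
Your treatment of the main case (where $p$ and $q$ both have two atoms) is correct and essentially identical to the paper's: reduce via Proposition \ref{prop:log_potential_convergence}, use Corollary \ref{cor:nu_z_convergence} for vague convergence of $\nu_{n,z}$, use parameter convergence to show $\dist(z, H_n \cap R_n)$ stays bounded away from $0$ for $z \notin H \cap R$, and then apply Theorem \ref{thm:singular_value_estimate} to push the support of $\nu_{n,z}$ away from $0$ so the $\log$-singularity is harmless.

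The gap is in your handling of the degenerate cases. You write that Corollary \ref{cor:brown_measure_injective} affords ``continuous dependence of the Brown measure on $(\mu_p, \mu_q)$,'' but that corollary gives only injectivity of the assignment, not continuity, and no continuity result is established anywhere in the paper. More seriously, the proposed ``diagonal argument across $n$'' does not confront the actual obstruction. When, say, $\mu_p = \delta_\alpha$, the hypothesis $\mu_{P_n} \to \delta_\alpha$ allows $P_n$ to retain a second atom $\alpha_n'$ whose weight $1 - a_n$ vanishes but whose \emph{location} need not converge (it can escape to $\pm\infty$, or oscillate). In that scenario the set $H_n \cap R_n$ does not stabilize, $\dist(z, H_n \cap R_n)$ need not have a positive liminf, and Theorem \ref{thm:singular_value_estimate} gives no uniform lower bound on $\sigma_{\min}(z - X_n)$. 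So the Hermitization machinery simply does not carry over, and approximation by two-atom models does not rescue it because the $X_n$ themselves \emph{are} the two-atom models with a runaway atom. The paper's proof avoids this by abandoning the logarithmic-potential route in the degenerate cases and instead controlling $\mu_n$ directly: it identifies $E_{\alpha_n + i\beta_n}(X_n) = E_{\alpha_n}(P_n) \cap E_{\beta_n}(Q_n)$, applies the dimension (parallelogram) identity $\tau(e \wedge f) = \tau(e) + \tau(f) - \tau(e \vee f)$ to lower-bound the mass $\mu_n$ places on specific atoms near the limit, and, in the mixed case where one of $p,q$ retains two atoms, combines this with \cite[Proposition 4.7]{BrownMeasurePaper1} to track the mass of the surviving branches of $H_n \cap R_n$. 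Some concrete replacement of this kind is needed; the continuity-plus-diagonal sketch as stated does not close the argument.
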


\begin{proof}
	Recall from the discussion after the statement of Theorem \ref{thm:convergence_brown_measure} in the Introduction that $\mu_p$ and $\mu_q$ have at most $2$ points in their supports. First, we consider the case where $p$ and $q$ have $2$ atoms, i.e. $(a, b) \in (0, 1)$ and $\alpha \neq \alpha'$, $\beta \neq \beta'$. Recall from the discussion after Definition \ref{def:p_q_spectra} that we may assume that $\alpha_n \to \alpha$, $\beta_n \to \beta$, $\alpha_n' \to \alpha'$, $\beta_n' \to \beta'$, $a_n \to a$, and $b_n \to b$
	
	Recall that we need to complete the following steps to complete the proof of Theorem \ref{thm:convergence_brown_measure} when $p$ and $q$ have $2$ atoms: 
\begin{enumerate}
	\item Show that for Lebesgue almost every $z \in \C$, the spectral measures $\nu_{n, z}$ converge to $\nu_z$ almost surely in the vague topology.
	\item Bound the minimum singular value of $z - X_n$ from below to justify the convergence
	\begin{equation}
		\frac{1}{2} \int_{0}^{\infty} \log x \, d \nu_{n, z}(x) \to \frac{1}{2} \int_{0}^{\infty} \log x \, d \nu_{z}(x) 
	\end{equation}
	for Lebesgue almost every $z \in \C$ almost surely. 
\end{enumerate}
	
	The first step was completed in Corollary \ref{cor:nu_z_convergence}.

	For the second step, it suffices to prove the convergence of the logarithmic integrals for $z \not \in H \cap R$, where $H$ and $R$ are the hyperbola and rectangle associated with $X = p + i q$. Fix a $z \not \in H \cap R$. Let $H_n$ and $R_n$ be the hyperbola and rectangle associated with $X_n$. 
	
	From the triangle inequality, for arbitrary $z \in \C$, the following inequality holds: 
	\begin{equation}
		\dist(z, H \cap R) \leq \dist(z, H_n \cap R_n) + \sup_{w \in H_n \cap R_n} \dist(w, H \cap R) \,.
	\end{equation}
	
	Hence, 
	\begin{equation}
		\dist(z, H_n \cap R_n) \geq \dist(z, H \cap R) - \sup_{w \in H_n \cap R_n} \dist(w, H \cap R) \,.
	\end{equation}
	
	As $n \to \infty$, $\sup_{w \in H_n \cap R_n} \dist(w, H \cap R) \to 0$. This can be seen by using the parameterization of $H_n \cap R_n$, $H \cap R$ in \cite[Corollary 4.5]{BrownMeasurePaper1} and comparing the points on $H_n \cap R_n$, $H \cap R$ with the same $\theta$. Since $z \not \in H \cap R$, then $\dist(z, H \cap R) > 0$. So, for $n$ sufficiently large, $\dist(z, H_n \cap R_n) > \delta$ for some $\delta > 0$. 
	
	Since $\norm{X_n} \leq \norm{P_n} + \norm{Q_n} = \max(\alpha_n, \alpha_n') + \max(\beta_n, \beta_n')$ and the sequences all converge, then the $X_n$ are uniformly bounded. Hence, the $z - X_n$ are uniformly bounded.
	
	Combining these two facts and Theorem \ref{thm:singular_value_estimate}, then for $n$ sufficiently large, $\sigma_{\min}(z - X_n) > \delta$ almost surely, for some $\delta > 0$. As $H_z(X_n)$ are uniformly bounded in $n$, then for $n$ sufficiently large, $\nu_{n, z}$ is almost surely supported on $[\delta, M]$ for some $M > 0$. 
	
	By applying the convergence $\nu_{n, z} \to \nu_z$ in the vague topology to $f \in C_c([0, \infty))$ where $f \equiv 0$ on $[\delta, M]$, we see that $\nu_z$ is also supported on $[\delta, M]$.
	
	Hence, we may choose $f \in C_c([0, \infty))$ such that $f(x) = \log(x)$ on $[\delta, M]$, so that the logarithmic integrals converge almost surely: 
	\begin{equation}
		\begin{aligned}
			& \frac{1}{2} \int_{0}^{\infty} \log x \, d \nu_{n, z}(x) = \frac{1}{2} \int_{0}^{\infty} f(x) \, d \nu_{n, z}(x) \longrightarrow \\
			& \quad \qquad  \frac{1}{2} \int_{0}^{\infty} f(x) \, d \nu_{z}(x) =\frac{1}{2} \int_{0}^{\infty} \log x \, d \nu_{z}(x) \,. 
		\end{aligned}
	\end{equation}	
	This completes the proof in the case where $p$ and $q$ both have $2$ atoms. 
	
	Next, we consider the case where both $p, q \in \R$. By applying the convergence $\mu_{P_n} \to \mu_p = \delta_p$ on $f$ that is $1$ on small a neighborhood of $p$ and supported on a slightly larger neighborhood, we see that the sum of the weights of the atoms of $P_n$ in any neighborhood $U$ converges to $1$ as $n \to \infty$. The same result applies for $Q_n$ and $q$ with neighborhood $V$. Let $n$ be sufficiently large so that the sum of the weights of the atoms of $P_n$ in $U$ is larger than $1 - \epsilon$ and the sum of the weights of the atoms of $Q_n$ in $V$ is larger than $1 - \epsilon$. If $\alpha_n, \alpha_n' \in U$ and $\beta_n, \beta_n' \in V$, then $\mu_n $ is supported on $U \times V$, so $\mu_n(U \times V) = 1$ and it is clear that $\mu_n$ tends to $\delta_{p + i q}$. In the other case, suppose that $\alpha_n \in U$ but $\alpha_n' \not \in U$. Then, $a_n = \tau(\chi_{\{ \alpha_n  \}}(P_n)) > 1 - \epsilon$. 
	
	When $(\alpha_n, \beta_n) \neq (\alpha_n', \beta_n')$, we claim the following equalities of (generalized) eigenspaces:
	\begin{equation}
		V_{\alpha_n + i \beta_n}(X_n) = E_{\alpha_n + i \beta_n}(X_n) = E_{\alpha_n}(P_n) \cap E_{\beta_n}(Q_n) \,.
	\end{equation}

	The first equality follows from Proposition \ref{prop:x_eigenspaces} and the fact that $(\alpha_n, \beta_n) \neq (\alpha_n', \beta_n')$. For the second equality, the non-trivial containment is: $E_{\alpha_n + i \beta_n}(X_n) \subset E_{\alpha_n}(P_n) \cap E_{\beta_n}(Q_n)$. For this, let $v \in E_{\alpha_n + i \beta_n}(X_n)$, $\norm{v} = 1$. Then, 
	\begin{equation}
		\alpha_n + i \beta_n = \brackets{X_n v, v} = \brackets{P_n v, v} + i \brackets{Q_n v, v} \,.
	\end{equation}
	
	As $P_n, Q_n$ are Hermitian, then $\brackets{P_n v, v}  = \alpha_n$ and $\brackets{Q_n v, v} = \beta_n$. As $P_n, Q_n$ are Hermitian and have up to $2$ eigenvalues, then $v \in E_{\alpha_n}(P_n) \cap E_{\beta_n}(Q_n)$, as desired. 
	
	Thus, from the parallelogram law,
	\begin{equation}
		\begin{aligned}
			\mu_n(\{\alpha_n + i \beta_n\}) & = \tau(\chi_{\{ \alpha_n  \}}(P_n) \wedge \chi_{ \{\beta_n\}} (Q_n))  \\
			&= \tau(\chi_{\{ \alpha_n  \}}(P_n)) + \tau(\chi_{ \{\beta_n\}} (Q_n)) - \tau(\chi_{\{ \alpha_n  \}}(P_n) \vee \chi_{ \{\beta_n\}} (Q_n)) \\
			& > (1 - \epsilon) + \tau(\chi_{ \{\beta_n\}} (Q_n)) - 1 \\
			& = \tau(\chi_{ \{\beta_n\}} (Q_n)) - \epsilon \\
			& = \mu_{Q_n}(\{\beta_n\}) - \epsilon \,.
		\end{aligned}
	\end{equation}
	Similarly, $\mu_n(\{\alpha_n + i \beta_n'\}) > \mu_{Q_n}(\{\beta_n'\}) - \epsilon$. Then, the $\mu_n$-measure of the atom(s) of $\{ \alpha_n + i \beta_n, \alpha_n + i \beta_n'  \}$ in $U \times V$ has measure greater than $1 - 3 \epsilon$, so again $\mu_n$ tends to $\delta_{p + i q}$.
	
	Finally, the case where exactly one of $p$ and $q$ is constant follows in a similar manner. Suppose that $q$ has $2$ atoms and $p \in \R$. Then $b \in (0, 1)$, $\beta \neq \beta'$, $b_n \to b$, $\beta_n \to \beta$, and $\beta_n' \to \beta'$. For the case that both atoms of $P_n$ are in a small neighborhood of $p$, the branches of $H_n \cap R_n$ are in small neighborhoods of $\{p + i \beta,  p + i \beta'\}$, and from \cite[Proposition 4.7]{BrownMeasurePaper1} these branches have the appropriate measures $\mu_{Q_n}(\{\beta_n\}) = b_n$ and $\mu_{Q_n}( \{\beta_n'\} ) = 1 - b_n$ so that $\mu_n$ tends towards $\mu = b \delta_{ \beta } + (1 - b) \delta_{\beta'}$. For the case that $\alpha_n$ is in a small neighborhood of $p$ and $\mu_{P_n}( \{\alpha_n\}) \approx 1$, then we use the previous argument to show that the $\mu_n (\{\alpha_n + i \beta_n\} ) \approx \mu_{Q_n}( \{\beta_n\} ) $ and $\mu_n( \{\alpha_n + i \beta_n'\} ) \approx \mu_{Q_n}(\{\beta_n'\})$.
\end{proof}

\begin{theorem}
	\label{thm:converse}
	Consider the random matrix model $X_n = P_n + i Q_n$, where $P_n, Q_n \in M_n(\C)$ are independently Haar-rotated Hermitian matrices with distributions with at most $2$ atoms. Suppose that the empirical spectral distribution of $X_n$ converges in probability in the vague topology to some deterministic probability measure $\mu$. Then, the law of $P_n$ converges to the law of $p$ and the law of $Q_n$ converges to the law of $q$ for some $p, q \in (M, \tau)$ where $p$ and $q$ are freely independent. Hence, $\mu$ is the Brown measure of $X = p + i q$. 
\end{theorem}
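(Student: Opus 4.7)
The plan is to combine the forward direction Theorem~\ref{thm:convergence_brown_measure} with the injectivity Corollary~\ref{cor:brown_measure_injective} via a subsequence argument. Write $\mu_{P_n} = a_n \delta_{\alpha_n} + (1 - a_n) \delta_{\alpha_n'}$ and $\mu_{Q_n} = b_n \delta_{\beta_n} + (1 - b_n) \delta_{\beta_n'}$, so that the laws of $P_n$ and $Q_n$ are encoded in the tuple $(a_n, b_n, \alpha_n, \alpha_n', \beta_n, \beta_n')$. I would first argue that this tuple is essentially precompact, pass to a subsequence along which the parameters converge, apply Theorem~\ref{thm:convergence_brown_measure} to identify the subsequential limit with the Brown measure of an appropriate $X = p + iq$, and then use injectivity to rule out the possibility of distinct subsequential limits.

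The first step, which I anticipate as the main obstacle, is to establish compactness of the parameter tuple. The weights $a_n, b_n \in [0, 1]$ lie in a compact set. For the positions, since $\mu$ is a probability measure, vague convergence in probability $\mu_n \to \mu$ upgrades to weak convergence and thus gives tightness of $\{\mu_n\}$. By Proposition~\ref{prop:x_eigenvalue}, the eigenvalues of $X_n$ all lie on $H_n \cap R_n$, and using the identity $V_{\alpha_n + i\beta_n}(X_n) = E_{\alpha_n}(P_n) \cap E_{\beta_n}(Q_n)$ (as in the proof of Theorem~\ref{thm:convergence_brown_measure}) together with the fact that two independently Haar-rotated coordinate subspaces of $\C^n$ of dimensions $a_n n$ and $b_n n$ intersect almost surely in dimension $\max(0, a_n n + b_n n - n)$, the $\mu_n$-mass at the corner $\alpha_n + i\beta_n$ equals $\max(0, a_n + b_n - 1)$ almost surely, and analogously for the other three corners of $R_n$. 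Tightness then forces any corner escaping to infinity to have vanishing mass, so that up to passing to a subsequence each position $\alpha_n, \alpha_n', \beta_n, \beta_n'$ either converges in $\R$ or tends to $\pm\infty$ with associated weight tending to $0$.

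Along such a subsequence, $\mu_{P_n} \to \mu_p$ and $\mu_{Q_n} \to \mu_q$ weakly for some compactly supported probability measures $\mu_p, \mu_q$ on $\R$, each with at most two atoms. I realize $p, q \in (M, \tau)$ as Hermitian, freely independent elements with these spectral measures via a free product construction. By Theorem~\ref{thm:convergence_brown_measure} applied along the subsequence, $\mu_n$ converges almost surely in the vague topology to the Brown measure $\mu_{p + iq}$; comparing with the hypothesis that the full sequence $\mu_n$ converges in probability to $\mu$ yields $\mu = \mu_{p + iq}$.

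Finally, Corollary~\ref{cor:brown_measure_injective} asserts that the pair $(\mu_p, \mu_q)$ is uniquely determined by $\mu$. Hence every convergent subsequence of $(\mu_{P_n}, \mu_{Q_n})$ has the same limit, and the standard argument that every subsequence admits a further subsequence converging to the same limit upgrades this to convergence of the full sequences $\mu_{P_n} \to \mu_p$ and $\mu_{Q_n} \to \mu_q$, completing the proposed proof.
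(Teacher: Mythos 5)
Your overall strategy — establish precompactness of the parameter tuple $(a_n, b_n, \alpha_n, \alpha_n', \beta_n, \beta_n')$, pass to a convergent subsequence, apply Theorem~\ref{thm:convergence_brown_measure}, and use Corollary~\ref{cor:brown_measure_injective} to upgrade to full-sequence convergence — matches the paper's plan. However, your tightness argument has a genuine gap at the step where you deduce that a position escaping to infinity must carry vanishing weight.

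You argue that the $\mu_n$-mass at the corner $\alpha_n + i\beta_n$ is $\max(0, a_n + b_n - 1)$ (which is correct, and is precisely the atom $\epsilon_{00}$ in the Brown measure formula), and that tightness of $\{\mu_n\}$ forces escaping corners to have vanishing mass. But the corner atoms do not exhaust the mass escaping to infinity: the branch of $H_n \cap R_n$ through the two corners with $x = \alpha_n$ also escapes, and it carries non-atomic mass. Concretely, take $\alpha_n \to \infty$, $\alpha_n' = 0$, $\beta_n = 0$, $\beta_n' = 1$, and $a_n = b_n = 1/2$. Then all four corner masses $\max(0, a_n + b_n - 1)$, $\max(0, a_n - b_n)$, $\max(0, b_n - a_n)$, $\max(0, 1 - a_n - b_n)$ are identically $0$, so your corner-mass criterion imposes no constraint whatsoever. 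Yet the right branch of $H_n \cap R_n$ (which is near $x \approx \alpha_n$) carries a fixed positive fraction of $\mu_n$ and escapes to infinity, so tightness of $\mu_n$ fails — but $a_n = 1/2 \not\to 0$. Thus ``escaping corners have vanishing mass'' does not imply ``escaping positions have vanishing weight.'' Vanishing of the corner masses at $x = \alpha_n$ only yields $a_n + b_n \leq 1 + o(1)$ and $a_n \leq b_n + o(1)$, hence $a_n \leq 1/2 + o(1)$, which is far from $a_n \to 0$.

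The paper closes this gap with a different device. Rather than tracking corner atoms, it normalizes $X_n' := X_n/\alpha_n$ so that the atom positions of $P_n'$, $Q_n'$ lie in $[-1,1]$. Since $\mu_n \to \mu$ in probability and any fixed compact set $K_\epsilon$ with $\mu(K_\epsilon) > 1 - \epsilon$ is eventually absorbed into any neighborhood of $0$ after rescaling by $1/\alpha_n$, it follows that $\mu_n' \to \delta_0$ in probability. Along a subsequence of the now-bounded rescaled parameters, Theorem~\ref{thm:convergence_brown_measure} identifies the limit as the Brown measure of some $p' + iq'$, and Corollary~\ref{cor:brown_measure_injective} forces $\mu_{p'} = \mu_{q'} = \delta_0$; since $P_n'$ has an atom at $1$ with weight $a_n$, one gets $a_n \to 0$. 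The paper then must still argue separately that $\alpha_n'$ stays bounded (else $\mu = 0$) and that $\mu_{Q_n}$ is tight, using the parallelogram law for projections. Your proposal omits both of these additional steps. The rescaling argument — letting the forward theorem and injectivity do the work on the renormalized model — is the key idea you are missing; the corner-atom count alone cannot substitute for it.
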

\begin{proof}
	The majority of the proof is spent showing that if $\mu_n$ converges vaguely in probability to a deterministic probability measure $\mu$, then $\mu_{P_n}$ and $\mu_{Q_n}$ are tight, i.e. for any subsequence of these sequences, there exists a further subsequence that converges in the vague topology. 
	
	Having shown this, the conclusion follows by passing to subsequences, using Theorem \ref{thm:convergence_brown_measure}, and using that the Brown measure of $X = p + i q$ determines $\mu_p$ and $\mu_q$ (Corollary \ref{cor:brown_measure_injective}). 
	
	Now, it suffices to show a subsequence of $\mu_n$ has $\mu_{P_{n_k}}$ and $\mu_{Q_{n_k}}$ converge.
	
	First, if the atoms of $P_n$ and $Q_n$, $\alpha_n, \alpha_n', \beta_n, \beta_n'$ are uniformly bounded then we may extract a convergent subsequence where all atoms and weights $a_n, b_n \in [0, 1]$ converge. Then, it is clear that $P_{n_k}$ and $Q_{n_k}$ converge in the vague topology.
	
	Thus, we may assume that $(\alpha_n, \alpha_n', \beta_n, \beta_n') \in \R^4$ is not uniformly bounded. By passing to a subsequence, we may assume that $\norm{(\alpha_n, \alpha_n', \beta_n, \beta_n')} \to \infty$. Also assume without loss of generality that $\Abs{\alpha_n} = \max( \Abs{\alpha_n}, \Abs{\alpha_n'}, \Abs{\beta_n}, \Abs{\beta_n'})$. We proceed to show that $a_n$, the weight of $\alpha_n$ in $\mu_{P_n}$, converges to $0$. 
	
	Let $X_n' = X_n / \alpha_n$, so that $X_n' = P_n' + i Q_n'$, where $P_n' = P_n / \alpha_n$, $Q_n' = Q_n / \alpha_n$. Then, the atoms of $P_n'$ are at $1, \alpha_n' / \alpha_n$ with respective weights $a_n, 1 - a_n$ and the atoms of $Q_n'$ are at $\beta_n / \alpha_n, \beta_n' / \alpha_n$ with respective weights $b_n, 1 - b_n$. By construction, the atoms of $P_n', Q_n'$ are in $[-1, 1]$. Let $\mu_n'$ be the empirical spectral distribution of $X_n'$.
	
	Since $\mu$ is a probability measure and $\mu_n \to \mu$ in probability in the vague topology, then we deduce that for every $\epsilon > 0$ and $\delta > 0$ there exists a compact set $K_\epsilon \subset \C$ such that $\mu(K_\epsilon) > 1 - \epsilon$ and for $n$ sufficiently large, $\mathbb{P}( \mu_n(K_\epsilon) > 1 - \epsilon ) > 1 - \delta $.
	
	Now, note that $\mu_{n}'( \frac{1}{\alpha_n} K_\epsilon  ) = \mu_n(K_\epsilon) $. For any neighborhood $U$ of $0$ in $\C$, we may choose $n$ sufficiently large to that $\frac{1}{\alpha_n} K_\epsilon \subset U$. Hence, for $n$ sufficiently large, $\mathbb{P}( \mu_n'(U) > 1 - \epsilon ) > 1 - \delta$. This implies that $\mu_n'$ converges to $\delta_0$ in probability. 
	
	From the boundedness of the atoms if $P_n', Q_n'$, any subsequence of $X_n'$ has a convergent subsequence where the atoms and weights of $P_{n_k}', Q_{n_k}'$ converge. From Theorem \ref{thm:convergence_brown_measure}, this implies that $\mu_{n_k}'$ converges almost surely to $\mu'$, the Brown measure of $X' = p' + i q'$, where $\mu_{P_{n_k}'}$ converges to $\mu_{p'}$ and $\mu_{Q_{n_k}'} $ converges to $\mu_{q'}$. Hence, $\mu' = \delta_0$. Since the assignment $(\mu_{p'}, \mu_{q'}) \mapsto \mu'$ is 1-1, then $p = q = 0$. Hence, $\mu_{P_{n_k}} \to \delta_0$. This implies that $\mu_{P_{n}} \to \delta_0$. For this to happen, the weight of the atom at $1$, $a_n$, has to tend to $0$. Thus we conclude that if $(\alpha_n, \alpha_n', \beta_n, \beta_n') \in \R^4$ is not uniformly bounded then the weight of one of these atoms goes to $0$.
	
	Without loss of generality, let us keep assuming that $a_n \to 0$. If $\alpha_n'$ is not bounded, then by passing a subsequence where $\Abs{\alpha_n'} \to \infty$, then the atoms $\alpha_n' + i \beta_n$ and $\alpha_n' + i \beta_n'$ go to $\infty$. As the weight of $\alpha_n$ in $P_n$ tends to $1$, then from the previous argument, the measure of the atoms $\alpha_n' + i \beta_n$ and $\alpha_n' + i \beta_n'$ tends to $1$. Then the limit measure has to be $\mu = 0$, a contradiction. Hence, $\alpha_n'$ is bounded, and by passing to a subsequence we may assume that $\alpha_n' \to \alpha'$. Thus, $\mu_{P_n} \to \delta_{\alpha'}$ for some $\alpha' \in \R$. 
	
	Now, we proceed to show that $\mu_{Q_n}$ is a tight sequence of measures. Since $\mu_n \to \mu$ in probability and $\mu$ is a probability measure, then for any $\epsilon > 0$, we may choose a compact set $K_\epsilon \subset \C$ so that for all $n$ sufficiently large, $\mathbb{P}( \mu_n(K_\epsilon) > 1 - \epsilon )  > 0$. Also assume $n$ is sufficiently large so that $a_n < \epsilon$. Hence,
	\begin{equation}
		\begin{aligned}
			\mu_n(\{\alpha_n' + i \beta_n\}) & = \tau(\chi_{\{ \alpha_n'  \}}(P_n) \wedge \chi_{ \{\beta_n\}} (Q_n))  \\
			&= \tau(\chi_{\{ \alpha_n'  \}}(P_n)) + \tau(\chi_{ \{\beta_n\}} (Q_n)) - \tau(\chi_{\{ \alpha_n'  \}}(P_n) \vee \chi_{ \{\beta_n\}} (Q_n)) \\
			& = (1 - a_n) + \tau(\chi_{ \{\beta_n\}}(Q_n)) - \tau(\chi_{\{ \alpha_n'  \}}(P_n) \vee \chi_{ \{\beta_n\}} (Q_n)) \\
			& > (1 - \epsilon) + \tau(\chi_{ \{\beta_n\}} (Q_n)) - 1 \\
			& = \tau(\chi_{ \{\beta_n\}} (Q_n)) - \epsilon \\
			& = \mu_{Q_n}(\{\beta_n\}) - \epsilon \\
			&= b_n - \epsilon \,.
		\end{aligned}
	\end{equation}
	Similarly, $\mu_n(\{\alpha_n' + i \beta_n'\}) > (1 - b_n) - \epsilon$. Thus, for $\epsilon < 1/3$, at least one of $\alpha_n' + i \beta_n, \alpha_n' + i \beta_n'$ is in $K_\epsilon$. Let $C_\epsilon$ be the projection of $K_\epsilon$ onto the $y$ coordinate. If both $\alpha_n' + i \beta_n, \alpha_n' + i \beta_n' \in K_\epsilon$, then $\mu_{Q_n}(C_\epsilon) = 1$. Assume that $\alpha_n' + i \beta_n \not \in K_\epsilon$. Then, with positive probability, $\mu_n(K_\epsilon) > 1 - \epsilon$, so $\mu_n(\{\alpha_n' + i \beta_n\}) < \epsilon$. From the inequalities above, this implies that $b_n < 2 \epsilon$, so that $\mu_{Q_n}(C_\epsilon) > 1 - 2 \epsilon$. Hence, $Q_n$ is tight.
	
	Then, by passing to subsequences, using Theorem \ref{thm:convergence_brown_measure}, and using that the Brown measure of $X = p + i q$ determines $\mu_p$ and $\mu_q$ (Corollary \ref{cor:brown_measure_injective}), then we conclude that the law of $Q_n$ converges to the law of $q$. Since the law of $P_n$ converged to $\delta_{\alpha'}$, then this completes the proof.
\end{proof}

\newpage

\printbibliography

\end{document}